\theoremstyle{plain}
\newtheorem{theorem}{Theorem}[section]
\newtheorem{prop}[theorem]{Proposition}
\newtheorem{coro}[theorem]{Corollary}
\theoremstyle{definition}
\newtheorem{remark}[theorem]{Remark}
\newtheorem{definition}[theorem]{Definition}
\numberwithin{equation}{section}
\newcommand{\ts}{\hspace{0.5pt}}
\newcommand{\nts}{\hspace{-0.5pt}}
\DeclareMathOperator{\card}{\mathrm{card}}
\DeclareMathOperator{\vol}{\mathrm{vol}}
\newcommand{\cA}{\mathcal{A}}
\newcommand{\cL}{\mathcal{L}}
\newcommand{\vL}{\varLambda}
\newcommand{\ZZ}{\mathbb{Z}\ts}
\newcommand{\RR}{\mathbb{R}\ts}
\newcommand{\QQ}{\mathbb{Q}}
\newcommand{\one}{\mathbbm{1}}
\newcommand{\defeq}{\mathrel{\mathop:}=}
\newcommand{\eqdef}{=\mathrel{\mathop:}}
\newcommand{\oplam}{\mbox{\Large $\curlywedge$}} 
\newcommand{\dd}{\mathop{}\!\mathrm{d}}
\newcommand{\exend}{\hfill $\Diamond$}
\newcommand{\myfrac}[2]{\frac{\raisebox{-2pt}{$#1$}}
      {\raisebox{0.5pt}{$#2$}}}
\newcommand{\bs}[1]{\boldsymbol{#1}}
\newcommand{\cvg}{\mathrm{cvg}\ts}
\begin{document}

\title[Pair correlations and covariograms of Rauzy fractals]{Pair
  correlations of one-dimensional model sets \\[3mm] and monstrous
  covariograms of Rauzy fractals}

\author{Michael Baake}

\author{Anna Klick}

\author{Jan Maz\'{a}\v{c}}

\address{Fakult\"{a}t f\"{u}r Mathematik,
  Universit\"{a}t Bielefeld,\newline \hspace*{\parindent}Postfach
  100131, 33501 Bielefeld, Germany}  
\email{$\{$mbaake,aklick,jmazac$\}$@math.uni-bielefeld.de}

\makeatletter
\@namedef{subjclassname@2020}{%
  \textup{2020} Mathematics Subject Classification}
\makeatother

\subjclass[2020]{52C23, 28A80}

\keywords{Primitive inflations, Rauzy windows, covariogram,
    renormalisation}

\begin{abstract}
  The averaged distance structure of one-dimensional regular model
  sets is determined via their pair correlation functions. The latter
  lead to covariograms and cross covariograms of the windows, which
  give continuous functions in internal space. While they are simple
  tent-shaped, piecewise linear functions for intervals, the typical
  case for inflation systems leads to convolutions of Rauzy fractals,
  which are difficult to compute. In the presence of an inflation
  structure, an alternative path is possible via the exact
  renormalisation structures of the pair correlation functions. We
  introduce this approach and derive two concrete examples, which
  display an unexpectedly complex and wild behaviour.
\end{abstract}

\maketitle
\thispagestyle{empty}

\section{Introduction}\label{sec:intro}
The distance structure of lattices and periodic point sets is
efficiently summarised in the corresponding theta series, which are
well studied and understood, and have close connections with modular
forms \cite{ConSl}. Much less is known for aperiodic point sets, even
for the most popular ones such as the Fibonacci or the silver mean
chain. It is questionable to what extent individual theta series are
meaningful, as no two members of the Fibonacci hull would have the
same series, and similarly for all other (regular) model sets.

A natural question would then be what the \emph{averaged} distance
structure is, where the average is taken over all elements of the hull
with a point at 0. This average is well defined and can often be
determined, at least in principle, from one element of the hull. For
regular model sets, which are particularly nice cut-and-project sets,
the answer is then given in terms of (sums of) pair correlation
functions. The latter, in turn, lead to covariograms of the coding
windows, or cross covariograms between different windows. These are
always continuous functions in internal space, which sounds nice and
easy. If the windows are simple polygons, one indeed obtains decent
functions. Nevertheless, no clear relation to aperiodic analogues of
theta series has materialised so far, despite several efforts in this
direction \cite{AKM2024, NOS, BHK,MW}.

In view of this, potential insight might come from investigating some
less prominent examples that are nevertheless somehow typical. Indeed,
even within the class of one-dimensional inflation tilings, windows of
Rauzy fractals are ubiquitous, but have not been studied from this
point of view. One reason certainly is the obstacle of computing the
covariogram of a~Rauzy fractal, which already in one dimension tends
to be a Cantorval \cite{BGM2024}.  The covariograms are continuous
functions (being convolutions of two functions that are both $L^{1}$
and $L^{\infty}$), with a~behaviour that is somewhat reminiscent to
that of a Weierstrass monster.

In fact, until recently, hardly any reliable technique for computing
these functions was known. Within the class of inflation point sets,
the identification of an exact renormalisation scheme for the pair
correlation functions in \cite{BGMan,ManiboThesis} opens the door to
some efficient and exact calculations, as we will demonstrate by way
of two examples (for more, see \cite{Klick}). The crucial connection
comes from the uniform distribution property of the model set points
within the windows, after their lift via the $\star$-map. Consequently,
the renormalisation scheme permits the calculation of the (cross)
covariograms at a dense set of points, which is enough for the
illustration of a continuous function.

This looks like a maybe tedious, but ultimately easy thing to do ---
until one actually does it.  Indeed, the spiky behaviour of the
covariograms turns out to be unexpectedly wild, and the convergence of
the approximants is surprisingly slow --- at least in some
examples. It is thus the goal of this paper to introduce the problem
and method via some characteristic examples, and to demonstrate that
this is a fractal phenomenon worth further study.

This paper is organised as follows. In Section \ref{section2}, we set
the scene and introduce the key concepts and notions around
covariograms, cut-and-project sets and pair correlation functions. In
the main Section \ref{section3}, we then derive the results for two
binary inflation systems. The first is based on the silver mean
inflation factor, $\lambda = 1+\sqrt{2}$, and the second on its
square. A central block will be the explicit derivation of the exact
renormalisation equations, and how they are used to approach the
(cross) covariograms.

\section{Preliminaries}\label{section2}

Let us begin by recalling some concepts and notions; see \cite{TAO}
for general background.

\begin{definition}\label{def:cvg}
   Let $W \subset \RR^{m}$ be a non-empty compact set. We define the 
   \emph{covariogram} of~$W$ for almost all $x \in \RR^{m}$ as
\[
   \cvg^{}_{W}(x)\, \defeq \, \vol\ts\bigl(W \cap (x+W)\bigr) \, = \, 
   \big( \mathbf{1}^{}_{-W}  \ast \mathbf{1}^{}_{W}\big)(x)\ts,
\]
where $\mathbf{1}^{}_{W}$ is the characteristic function of
$W$\hspace{-.06cm}, and $\ast$ stands for the usual convolution of two
functions $f,\ts g\in L^{1}(\RR^m)$ as given by
\[
     (f\ast g)(x) \, = \int_{\RR^m} f(y)\ts g(x-y) \dd y \ts ,
\]
with the usual understanding that this is well defined for almost all
$x\in\RR^m$.
\end{definition}

Arguably, this is most famously known in relation to the
\textit{covariogram problem} whether the covariogram of a convex body
in $\RR^m$ determines the body uniquely, up to translations and
reflections. The answer is positive for convex polytopes when
$m \leqslant 3$; see \cite{Bianchi}. Variations of this problem exist
and apply to our setting, such as determining when two model sets
share the same covariogram and the corresponding inverse problem
\cite{BG-Homometric}.

Before turning our attention to the realm of model sets, we briefly
recall and collect various properties of a covariogram that
immediately follow from its definition, some usual inequalities, and
elementary change of variables arguments; see \cite{TAO,Klick} for
further references.

\begin{prop}\label{prop:prop_cvg}
  Let\/ $W, W' \subset \RR^{m}$ be non-empty compact sets. Then, 
\begin{enumerate}
\itemsep=2pt
\item[(i)] $\cvg^{}_{W} \in C^{}_{\mathtt{c}}(\RR^m)$, and\/ $\cvg^{}_{W}$
   is an even function, 
\item[(ii)]
          $\mathrm{supp} ( \cvg^{}_{W}) \subseteq W \! - W \defeq
          \{x-y : x,y\in W \}$,
\item[(iii)] $W \nts\subseteq W'$ implies\/
          $\cvg^{}_{W}(x) \, \leqslant \, \cvg^{}_{W'}(x)$ for all\/
          $\in\RR^m$,
\item[(iv)] $\cvg^{}_{W}$ is Fourier transformable, and one has,
          for all\/ $x\in\RR^m$,
\[
  \widehat{\cvg^{}_{W}}(x) \, = \, \bigl|
  \widehat{\mathbf{1}^{}_{W}}(x) \bigr|^2 \ts,
\]
\item[(v)] for all\/ $b\in \RR^m$ and all\/ $\lambda \in \RR$ with\/
  $\lambda > 0$, one has
\[ 
  \cvg^{}_{W +\ts b}(x) \, = \, \cvg^{}_{W}(x)\ts, \quad \mbox{and}
  \quad \, \cvg^{}_{\lambda W} (x) \, = \, \lambda \ts\ts \cvg^{}_{W}
  \Big( \myfrac{x}{\lambda} \Big) ,
\]
\item[(vi)] if\/
  $\mathrm{int}(W) \cap \mathrm{int}(\mathrm{W'}) = \varnothing$
  with\/ $\vol(\partial W) = \vol(\partial W')=0$, one has
\[ 
   \cvg^{}_{W\ts \cup \ts W'}(x) \, = \, \cvg^{}_{W}(x) + \cvg^{}_{W'}(x) +  
   \bigl(\mathbf{1}^{}_{-W}\ast \mathbf{1}^{}_{W'}\bigr) (x) + 
   \bigl(\mathbf{1}^{}_{-W}\ast  \mathbf{1}^{}_{W'}\bigr) (-x)
\]
for almost every\/ $x \in \RR^m \nts$. \qed
\end{enumerate}
\end{prop}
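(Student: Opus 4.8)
The plan is to verify each of the six assertions directly from the definition $\cvg^{}_{W} = \mathbf{1}^{}_{-W} \ast \mathbf{1}^{}_{W}$, treating the convolution-theoretic parts and the elementary geometric parts separately. The guiding principle throughout is that the characteristic function of a non-empty compact set lies in $L^{1}(\RR^m) \cap L^{\infty}(\RR^m)$, so all convolutions in sight are well defined, and in fact continuous.

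\smallskip
\textbf{Parts (i) and (ii).} For evenness, I would apply the substitution $y \mapsto -y$ in the convolution integral and use $\mathbf{1}^{}_{-W}(-y) = \mathbf{1}^{}_{W}(y)$, which swaps the two factors and yields $\cvg^{}_{W}(-x) = \cvg^{}_{W}(x)$. Continuity follows from the standard fact that the convolution of an $L^{1}$ function with an $L^{\infty}$ function is (uniformly) continuous; compact support comes from the set-theoretic identity $\{W \cap (x+W) \neq \varnothing\} \subseteq W - W$, which is compact as the continuous image of $W \times W$. This simultaneously gives the support statement (ii). I would establish (ii) first, as a purely geometric observation, since (i) reuses it.

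\smallskip
\textbf{Parts (iii) and (v).} Monotonicity (iii) is immediate from $\mathbf{1}^{}_{W} \leqslant \mathbf{1}^{}_{W'}$ pointwise when $W \subseteq W'$, combined with positivity of the integrand; I would phrase it via $\vol\bigl(W \cap (x+W)\bigr) \leqslant \vol\bigl(W' \cap (x+W')\bigr)$. The translation and scaling identities (v) are the change-of-variables arguments flagged in the text: translation invariance follows because $(W+b) \cap (x + W + b) = b + \bigl(W \cap (x+W)\bigr)$ and $\vol$ is translation invariant, while the dilation formula comes from substituting $y = \lambda z$ in the convolution, producing the Jacobian factor $\lambda^{m}$ against the volume scaling of $W \cap (x + W)$ under $W \mapsto \lambda W$; care with the single power of $\lambda$ versus $\lambda^{m}$ is the only subtlety, and it resolves once one tracks that the integral defining $\cvg$ carries its own $\dd y$.

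\smallskip
\textbf{Parts (iv) and (vi).} For (iv), I would invoke the convolution theorem: since $\cvg^{}_{W}$ is a continuous, compactly supported (hence integrable) function, it is Fourier transformable, and $\widehat{\mathbf{1}^{}_{-W}\ast\mathbf{1}^{}_{W}} = \widehat{\mathbf{1}^{}_{-W}} \cdot \widehat{\mathbf{1}^{}_{W}}$; using $\mathbf{1}^{}_{-W}(y) = \mathbf{1}^{}_{W}(-y)$ gives $\widehat{\mathbf{1}^{}_{-W}} = \overline{\widehat{\mathbf{1}^{}_{W}}}$ (as $\mathbf{1}^{}_{W}$ is real-valued), so the product collapses to $\bigl|\widehat{\mathbf{1}^{}_{W}}\bigr|^2$. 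For the additivity formula (vi), I would write $\mathbf{1}^{}_{W \cup W'} = \mathbf{1}^{}_{W} + \mathbf{1}^{}_{W'}$ almost everywhere --- this is where the hypotheses $\mathrm{int}(W) \cap \mathrm{int}(W') = \varnothing$ and $\vol(\partial W) = \vol(\partial W') = 0$ enter, ensuring the overlap on the boundary is null --- and then expand $\mathbf{1}^{}_{-(W \cup W')} \ast \mathbf{1}^{}_{W \cup W'}$ bilinearly into four terms. Two of these are $\cvg^{}_{W}$ and $\cvg^{}_{W'}$; the two mixed terms are $\mathbf{1}^{}_{-W}\ast\mathbf{1}^{}_{W'}$ and $\mathbf{1}^{}_{-W'}\ast\mathbf{1}^{}_{W}$, and I would show the latter equals the reflection $\bigl(\mathbf{1}^{}_{-W}\ast\mathbf{1}^{}_{W'}\bigr)(-x)$ by the same substitution $y \mapsto -y$ used for evenness. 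The main obstacle, modest as it is, is the bookkeeping in (vi): one must justify replacing $\mathbf{1}^{}_{W\cup W'}$ by the sum up to a null set and confirm that the cross terms are genuinely each other's reflections rather than coinciding, which is why the identity is stated for almost every $x$ rather than everywhere.
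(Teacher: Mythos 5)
Your overall route is exactly the one the paper intends: the paper gives no proof of this proposition at all, merely declaring the properties to be immediate consequences of the definition, standard inequalities and changes of variables, and your treatment of (i), (ii), (iii), (iv) and (vi) fills this in correctly. In particular, continuity via $L^{1}\ast L^{\infty}$, the support inclusion from $W\cap(x+W)\neq\varnothing \Rightarrow x\in W-W$, the convolution theorem with $\widehat{\mathbf{1}^{}_{-W}}=\overline{\widehat{\mathbf{1}^{}_{W}}}$, and the bilinear expansion in (vi) using $\mathbf{1}^{}_{W\cup W'}=\mathbf{1}^{}_{W}+\mathbf{1}^{}_{W'}$ a.e.\ (valid because $W\cap W'\subseteq \partial W\cup\partial W'$ under the stated hypotheses) are all sound.

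The one step that does not close is the dilation identity in (v). The substitution $y=\lambda z$ in
$\cvg^{}_{\lambda W}(x)=\int_{\RR^m}\mathbf{1}^{}_{-W}(y/\lambda)\,\mathbf{1}^{}_{W}\bigl((x-y)/\lambda\bigr)\dd y$
produces the Jacobian $\lambda^{m}$ and nothing else, so the correct general statement is $\cvg^{}_{\lambda W}(x)=\lambda^{m}\ts\cvg^{}_{W}(x/\lambda)$; equivalently, $\vol\bigl(\lambda W\cap(x+\lambda W)\bigr)=\lambda^{m}\vol\bigl(W\cap(\lambda^{-1}x+W)\bigr)$. Your claim that the tension between $\lambda$ and $\lambda^{m}$ ``resolves once one tracks that the integral carries its own $\dd y$'' is not a resolution --- there is no cancellation, and the identity with a single power of $\lambda$ is simply false for $m\geqslant 2$. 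The honest conclusion is that the formula as printed holds only for $m=1$ (which is the case actually used in the paper's examples, and is consistent with the factor $\lvert\det(\lambda^{\star})\rvert$ appearing in the proof of the later renormalisation proposition); you should state the $\lambda^{m}$ factor explicitly rather than paper over the discrepancy.
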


Clearly, the complexity of determining the covariogram of a given set
$W$ depends on its structure. We are interested in finding the
covariograms of so-called \emph{Rauzy fractals}. They are solutions of
an iterated function system and are intimately related to Pisot
substitutions and their description as model sets. We briefly
summarise this connection in what follows, and refer the reader to
\cite{TAO} for further details and to \cite{Meyer,STTopo,SingThesis} for
deeper connections.

Let $\cA$ be a finite set of $n$ symbols, called an \emph{alphabet}.
A \emph{substitution rule} $\varrho$ is a map that sends each element
of $\cA$ (\emph{a letter}) to a non-empty concatenation of finitely
many letters (\emph{a word}), and extends to all (bi-infinite) words
by the endomorphism property. To every substitution rule $\varrho$,
one can assign the \emph{substitution matrix}
$M^{}_{\varrho} \in \mathrm{Mat}(n,\, \ZZ)$ with entries
\[ 
  \bigl( M^{}_{\varrho}\bigr)^{}_{ij} \ts = \,
  \text{number of letters of type $i$
       in } \varrho(j)\ts,
\]
where we have fixed a numbering of the letters of $\cA$. A
substitution $\varrho$ is \emph{primitive} if its substitution matrix
$M^{}_{\varrho}$ is primitive.  By iterating a primitive substitution
rule, starting with a~letter from the alphabet (or, more generally,
with a \emph{legal word} \cite[Def.~4.5]{TAO}), we obtain for
(a~suitable power of) the substitution a (bi)-infinite fixed point ---
a sequence of letters from the alphabet which is invariant under the
action of (the power of) the substitution rule.

Since the substitution matrix is a non-negative matrix, via standard
\textit{Perron--Frobenius} (PF) arguments \cite[Ch.~2.4]{TAO}, it
provides insight into various properties of $\varrho$.  In particular,
if a~primitive substitution is additionally a \textit{Pisot}
substitution, meaning the PF eigenvalue of $M^{}_{\varrho}$ is a
\emph{Pisot–Vijayaraghavan} (PV) number and the characteristic
polynomial is irreducible, the substitution leads to a tiling of the
real line by assigning a~closed interval to each letter whose (natural
tile) length is proportional to an entry of the left PF eigenvector of
$M^{}_{\varrho}\ts$. These intervals are called \textit{prototiles}.
Then, the PF eigenvalue of $M^{}_{\varrho}$ gives the inflation factor
$\lambda$ of the induced (geometric) inflation rule. In this setting,
it is natural to work with the \emph{displacement
  matrix}~$T^{}_{\varrho}$, which is a set-valued analogue of the
substitution matrix with entries
\[ 
  \bigl(T^{}_{\varrho}\bigr)^{}_{ij} \, = \, T^{}_{ij} \, = \,
  \left\{\mbox{relative positions of tiles of type $i$ in supertile }
    \varrho(j) \right\}\ts,
\]
and one has $(M^{}_{\varrho})^{}_{ij} = \card(T^{}_{ij})$.  Placing a
\emph{control point} on the left endpoint of each tile, we can
understand the fixed point of a given substitution as a collection of
point sets assigned to each letter. If we collect all control points
of tiles of type $i$, we obtain a Delone set $\vL^{}_{i}\subset \RR$.
The set of all control points will be denoted by $\vL$. It satisfies
\[ 
     \vL \, =  \bigcup_{k=1}^n \vL^{}_{k} 
\]
and the union is disjoint. Since one considers a tiling of the real
line described as a fixed point of a Pisot substitution (inflation)
rule $\varrho$ with $n$ letters (prototiles) with the inflation factor
$\lambda$, this is reflected on the level of the control points as
\begin{equation}\label{eq:EMS}
    \vL^{}_i \, = \, \bigcup_{j=1}^{n} \, \lambda \vL+ T^{}_{ij} \, = \, 
    \bigcup_{j=1}^{n} \, \bigcup_{t \in T^{}_{ij}} \lambda \vL^{}_j + t \ts, 
\end{equation}
where, again, the unions are disjoint. Note that the RHS, viewed as a
mapping between $n$-tuples of Delone sets, does not define a
contraction with respect to any natural metric. Consequently, a
characterisation of the solution space of \eqref{eq:EMS} is
difficult. We are not aware of any general result in this direction.

If the alphabet is binary and $\lambda$ is a PV number, the sets
$\vL^{}_{i}$ are \emph{regular model sets} \cite{HS2003}. For larger
alphabets, this problem is known under the name \emph{Pisot
  substitution conjecture}. It is unclear whether or not it should
hold, but all known examples with irreducible characteristic
polynomial of the substitution matrix satisfy it. We refer the reader
to~\cite{APisot} for a summary on the Pisot substitution conjecture,
to \cite{Barge} for various families for which the conjecture has been
verified, and to \cite{BalRus} for an extensive computer search for a
counterexample.

Model sets are special cases of cut-and-project sets with nice
properties, both arising from cut-and-project schemes; compare
\cite{Meyer,Moo97}.

\begin{definition}\label{def:CPS} 
  A \emph{cut-and-project scheme} (CPS) is a triple
  $(\RR^d,\, H,\, \cL)$ consisting of $\RR^d$ (\textit{direct/physical
    space}) and a locally compact Abelian group (LCAG) $H$
  (\textit{internal space}), together with a discrete co-compact
  subgroup --- a lattice --- $\cL \subset \RR^d \times H$ and the
  natural projections $\pi: \RR^{d} \times H\to \RR^{d}$ and
  $\pi^{}_{\text{int}} :\RR^{d} \times H\to H$ enjoying the following
  properties:
\begin{enumerate}
\itemsep=2pt
\item The restriction $\pi|_{\cL}$ is one-to-one. 
\item The image $\pi_{\text{int}}(\cL)$ is dense in $H$.
\end{enumerate}
\end{definition}

Given a CPS $(\RR^d,\, H,\, \cL)$, we set $L= \pi( \cL)$, which is a
subgroup of $\RR^d$. The first condition in the definition of a CPS
implies the existence of a mapping $\star: L \to H$, called the
\emph{$\star$-map} (known as the star map), defined by
$\star= \pi_{\text{int}} \circ ( \pi|_{\cL})^{-1}$. If $H = \RR^m$,
the CPS is called \textit{Euclidean}. We summarise a CPS via the
diagram
\begin{equation*}
\renewcommand{\arraystretch}{1.2}
\begin{array}{ccccc@{}l}
  \RR^{d} & \xleftarrow{\;\;\; \pi \, \ts \;\;\; }
  & \RR^{d} \nts\nts \times \nts\nts H 
& \xrightarrow{\;\: \pi^{}_{\text{int}} \;\: } & H & \\
  \cup & & \cup & & \cup  & \hspace*{-2ex}
           \raisebox{1pt}{\text{\scriptsize dense}} \\
  \pi (\cL) & \xleftarrow{\;\ts 1-1 \;\ts } & \cL & \xrightarrow{ \qquad }
  &\ts \pi^{}_{\text{int}} (\cL) & \\ \| & & & & \| & \\ 
  L  & \multicolumn{3}{c}{\xrightarrow{\qquad\quad\quad \
       \star \quad\quad\qquad}} 
&  {L}^{\star\nts}   &  
\end{array}
\renewcommand{\arraystretch}{1}
\end{equation*}
as usual. With the $\star$-map, we can now define a model set. We
refer the reader to \cite[Sec.~7]{TAO} and~\cite{Moo97} for further
discussions of model sets and their properties.

\begin{definition}\label{def:model_set}
  Given a CPS $(\RR^d,\, H,\, \cL)$ and an arbitrary, relatively
  compact set $W \subset H$, we denote by $\oplam(W)$ the set
\[ 
    \oplam(W) \, \defeq \, \{ x \in L : x^\star \in W \}\ts, 
 \]
 which we call a \textit{cut-and-project set}. If $W$ has non-empty
 interior, $\oplam(W)$, or any translate $t+\oplam(W)$ with
 $t \in \RR^{d}$, is called a \textit{model set}. When the boundary
 $\partial W$ has zero Haar measure in $H$, the model set is called
 \textit{regular}.
\end{definition}

Now, one version of the Pisot substitution conjecture states that, if
we have a set of control points $\vL^{}_{i}$ of a~tiling arising from
a Pisot substitution, we can find a suitable LCAG $H$ and relatively
compact sets $W^{}_{i} \subset H$ with non-empty interior such that
$\vL^{}_{i} = \oplam(W^{}_{i})$ up to a~set of zero density. We
restrict ourselves to unimodular substitutions, which means we have
$\lvert \det(M_{\varrho})\rvert = 1$.  Under this additional
assumption, we obtain a Euclidean CPS \cite{TAO, SingThesis}.  This is
also plausible from the results in \cite{BK}, which establish an
appropriate connection to a~minimal rotation on a torus, and thus a
fully Euclidean embedding.

Since one can, without loss of generality, assume that all $\vL^{}_{i}$
belong to the ring of integers $\mathcal{O}^{}_{\QQ(\lambda)}$ of the
algebraic number field $\QQ(\lambda)$, where $\lambda$ is the
inflation factor, we employ a~\emph{Minkowski embedding} of
$\mathcal{O}^{}_{\QQ(\lambda)}$ to obtain the lattice $\cL$. The
projection $\pi$ on its first coordinate gives the control points,
whereas the remaining ones give the projection $\pi^{}_{\mathrm{int}}$
into the internal space, which is $\RR^{m}$ (if $\lambda$ is of degree
$n$, then $m=n-1$). Now, for the windows, one considers the
$\star$-image of Eq.~\eqref{eq:EMS} and its closure. Then, with
$W^{}_{i} \defeq \overline{\vL^{\star}_i}$, we obtain
\begin{equation}\label{eq:IFS_window}
    W^{}_i \, =  \bigcup_{j=1}^n \bigcup_{t \in T_{ij}}
    \lambda^{\star} W^{}_j + t^{\star} ,
\end{equation}
which is an iterated function system on the space
$\big(\mathcal{K}(\RR^{m})\big)^n$ of all $n$-tuples of non-empty
compact subsets of $\RR^{m}$ equipped with Hutchinson's metric; see
\cite{BM-Self} for details. Since the RHS defines a contraction, by
Hutchinson's theorem \cite{Hutch} (a version of the Banach's
contraction principle), one has a unique solution. The resulting sets
are called \emph{Rauzy fractals} and have been studied extensively;
see for example \cite{AHRauzy,BG2020, STTopo}. Some of their
properties include the fact that they are topologically regular (i.e.,
no isolated points) and compact sets of positive measure with a
(usually) fractal boundary \cite{BST}. If the internal space is
one-dimensional, one can even speak of the Rauzy fractal being a
\textit{Cantorval}, for which we refer the reader to
\cite{BGM2024}. Let us summarise some useful properties of Rauzy
fractals as follows.

\begin{prop}[{\textnormal{\cite[Cor.~6.66]{SingThesis}}}]\label{prop:IFS_window}
  Let\/ $\{W^{}_{i} : 1 \leqslant i \leqslant n \}$ be the unique
  solution of \eqref{eq:IFS_window}, as obtained from a unimodular
  Pisot substitution. Then, the following properties hold.
\begin{itemize}
\itemsep=2pt
\item[(i)] All\/ $W^{}_{i}$ have positive Lebesgue measure.
\item[(ii)] In the IFS \eqref{eq:IFS_window}, the unions on the
     right-hand side are measure disjoint. 
\item[(iii)] The boundaries\/ $\partial W^{}_{i}$ have zero
     Lebesgue measure.
\item[(iv)] The sets\/ $W^{}_{i}$ are topologically regular, i.e.,
     they are the closures of their interiors and thus contain no
     isolated points.
\item[(v)] The interiors of the\/ $W^{}_{i}$ are disjoint.  \qed
\end{itemize}
\end{prop}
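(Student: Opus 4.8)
The plan is to route all five statements through the single nonnegative vector $\mu = (\mu^{}_1,\dots,\mu^{}_n)$ with $\mu^{}_i \defeq \vol(W^{}_i)$, using Perron--Frobenius (PF) theory for $M^{}_\varrho$ against the contraction built into \eqref{eq:IFS_window}. First I would pin down the Jacobian of the maps in the IFS. Since $\lambda$ is the PF eigenvalue and $\lvert\det(M^{}_\varrho)\rvert = 1$, the field norm of $\lambda$ has modulus $1$, so the linear map $x \mapsto \lambda^\star x$ on $\RR^m$ is invertible with $\lvert\det \lambda^\star\rvert = 1/\lambda < 1$; in particular it is a genuine contraction. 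Hence each piece $\lambda^\star W^{}_j + t^\star$ has volume $\tfrac1\lambda\mu^{}_j$, and subadditivity of Lebesgue measure applied to \eqref{eq:IFS_window}, together with $\card(T^{}_{ij}) = (M^{}_\varrho)^{}_{ij}$, yields the componentwise inequality $\lambda\,\mu^{}_i \leqslant \sum_j (M^{}_\varrho)^{}_{ij}\,\mu^{}_j$, i.e.\ $\lambda\,\mu \leqslant M^{}_\varrho\,\mu$.

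The key step for (ii) is to upgrade this to an equality. Let $\bs{u} > 0$ be the left PF eigenvector, $\bs{u}\,M^{}_\varrho = \lambda\,\bs{u}$. Pairing the inequality with $\bs{u}$ gives $\lambda\,\bs{u}\cdot\mu \leqslant \bs{u}\,M^{}_\varrho\,\mu = \lambda\,\bs{u}\cdot\mu$, so equality holds. As every slack term $\sum_j (M^{}_\varrho)^{}_{ij}\mu^{}_j - \lambda\mu^{}_i$ is nonnegative and each $u^{}_i > 0$, each slack must vanish, which says precisely that for every $i$ the pieces $\{\lambda^\star W^{}_j + t^\star\}$ on the right of \eqref{eq:IFS_window} are pairwise measure-disjoint; this is (ii), and it also shows $M^{}_\varrho\,\mu = \lambda\,\mu$, so $\mu$ lies in the one-dimensional, strictly positive PF eigendirection. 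For (i) it then remains to rule out $\mu = 0$. Primitivity of $\varrho$ makes each $\vL^{}_i$ a Delone set whose density exists (unique ergodicity) and is positive, being proportional to a PF frequency. Since for any compact window the cut-and-project count obeys $\dens(\oplam(W^{}_i)) \leqslant \vol(W^{}_i)/\mathrm{covol}(\cL)$, I obtain $\mu^{}_i = \vol(W^{}_i) \geqslant \mathrm{covol}(\cL)\,\dens(\vL^{}_i) > 0$, which is (i).

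The genuine difficulty is (iii), and I expect this to be the main obstacle. Since $\lambda^\star$ is invertible, one has $\partial(\lambda^\star W^{}_j + t^\star) = \lambda^\star\partial W^{}_j + t^\star$, and $\partial(A\cup B) \subseteq \partial A \cup \partial B$ gives the crude inclusion $\partial W^{}_i \subseteq \bigcup_j\bigcup_{t\in T^{}_{ij}}(\lambda^\star\partial W^{}_j + t^\star)$. Rerunning the PF argument on $b^{}_i \defeq \vol(\partial W^{}_i)$ only yields $b = c\,\mu$ for some constant $c\in[0,1]$, which is too lossy to force $c = 0$, because the inclusion over-counts seams lying interior to $W^{}_i$. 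The remedy is to replace this cover by a contact (boundary) graph: by finite local complexity of a primitive inflation, only finitely many neighbour configurations occur, and the portion of each piece boundary that survives in $\partial W^{}_i$ subdivides under \eqref{eq:IFS_window} according to a finite contact matrix $B$ whose entries are strictly dominated by those of $M^{}_\varrho$. Covering $\partial W^{}_i$ at level $n$ by the at most $C\,\rho(B)^n$ surviving pieces, each of $\RR^m$-volume $\lambda^{-n}\,O(1)$, gives $\vol(\partial W^{}_i) \leqslant C\,(\rho(B)/\lambda)^n$, which tends to $0$ as $n\to\infty$ precisely when the spectral gap $\rho(B) < \lambda$ holds. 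Verifying finiteness of the contact graph and, above all, establishing the strict inequality $\rho(B) < \lambda$ is the crux; this is where the Pisot and unimodularity hypotheses really bite, and it is the hardest and most model-dependent part.

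Once (iii) is in hand, the remaining items are formal. Because $W^{}_i = \overline{W^{}_i}$ splits as $\mathrm{int}(W^{}_i) \sqcup \partial W^{}_i$ with $\vol(\partial W^{}_i) = 0$, property (i) forces $\vol(\mathrm{int}(W^{}_i)) = \mu^{}_i > 0$, so $\mathrm{int}(W^{}_i) \neq \varnothing$; since \eqref{eq:IFS_window} places an affine copy of each $\mathrm{int}(W^{}_j)$ with $(M^{}_\varrho)^{}_{ij} > 0$ openly inside $W^{}_i$, primitivity propagates interior points throughout, and the presence of arbitrarily small sub-pieces (affine copies of some $W^{}_k$ with nonempty interior) near every point of $W^{}_i = \overline{\vL^{\star}_i}$ gives $W^{}_i = \overline{\mathrm{int}(W^{}_i)}$ with no isolated points, which is (iv). Finally, each $\vL^{}_i$ is now a regular model set, so $\dens(\vL^{}_i) = \vol(W^{}_i)/\mathrm{covol}(\cL)$ exactly; summing over $i$ and comparing with $\dens(\vL) = \vol\bigl(\bigcup_i W^{}_i\bigr)/\mathrm{covol}(\cL)$ forces $\vol\bigl(\bigcup_i W^{}_i\bigr) = \sum_i \mu^{}_i$, so the $W^{}_i$ are mutually measure-disjoint, and with null boundaries this upgrades to disjoint interiors, giving (v).
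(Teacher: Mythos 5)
The paper offers no proof of this proposition at all --- it is imported verbatim from \cite[Cor.~6.66]{SingThesis} --- so the only question is whether your argument stands on its own. Parts (i) and (ii) do: the computation $\lvert\det\lambda^{\star}\rvert=\lvert N(\lambda)\rvert/\lambda=1/\lambda$, the subadditivity bound $\lambda\mu\leqslant M^{}_{\varrho}\mu$, and the pairing with the strictly positive left PF eigenvector to kill the slack (hence pairwise measure-disjointness and $M^{}_{\varrho}\mu=\lambda\mu$) is the standard and correct route, and the lower bound $\vol(W^{}_{i})\geqslant\mathrm{covol}(\cL)\ts\dens(\vL^{}_{i})>0$ from $\vL^{}_{i}\subseteq\oplam(W^{}_{i})$ is fine. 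Part (iv) is also fine once (iii) is available.

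The two genuine gaps are in (iii) and (v). For (iii) you correctly observe that the naive boundary recursion only gives $b=c\ts\mu$, and you then appeal to a contact/boundary graph with spectral radius $\rho(B)<\lambda$ --- but you never establish that strict inequality, and you yourself flag it as ``the crux''. Your justification that the entries of $B$ are ``strictly dominated by those of $M^{}_{\varrho}$'' is not meaningful as stated: the contact graph lives on an alphabet of adjacency (overlap) configurations, not on $\cA$ (compare the paper's discrepancy matrices, which are larger than $2\times 2$), so there is no entrywise comparison with $M^{}_{\varrho}$ from which Perron--Frobenius monotonicity could be invoked; proving $\rho(B)<\lambda$ is exactly the nontrivial content, and in the paper's two examples it is only checked by explicit computation ($\gamma\approx 2.206<1+\sqrt2$ and $\gamma\approx 5.494<3+2\sqrt2$). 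For (v), your closing step uses $\dens(\vL^{}_{i})=\vol(W^{}_{i})/\mathrm{covol}(\cL)$, i.e.\ that $\vL^{}_{i}$ agrees with the model set $\oplam(W^{}_{i})$ up to density zero. That is precisely the Pisot substitution conjecture, which the paper explicitly notes is proved only for binary alphabets \cite{HS2003} and open in general; the construction only yields $\vL^{}_{i}\subseteq\oplam(W^{}_{i})$ and hence $\dens(\vL^{}_{i})\leqslant\vol(W^{}_{i})/\mathrm{covol}(\cL)$, which points the wrong way for your squeeze. So as written, (iii) is unproved and (v) is circular for alphabets with more than two letters; both require the finer IFS/tiling analysis that Sing's thesis supplies.
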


In this context, we also refer the reader to further pioneering works
on the geometry of unimodular Pisot substitutions, such as
\cite{AI01,IR06,SW02}.

Since the topology of Rauzy fractals can be extremely complicated,
such as having infinitely many connected components with non-integer
boundary dimensions \cite{STTopo}, their covariograms are, by standard
means, impossible to obtain. On the other hand, since Rauzy fractals
correspond (in some sense) to fixed points of substitutions, we have
an additional tool at hand --- the \emph{pair correlation
  functions}. For a~model set $\vL = \bigcup_{j=1}^n \vL^{}_{j}$, let
\[  
  \nu^{}_{ij}(y) \, \defeq  \lim_{r\to \infty} \frac{1}{\card (\vL^{(r)})} 
    \sum_{\substack{x  \in \vL^{(r)}_{i}\\ x+y \in \vL^{}_j}} \! \! 1
\]
denote the relative frequency of a $2$-point patch consisting of a
control point of type $i$ on the left and a control point of type $j$
on the right with distance $y \in \vL^{}_j - \vL^{}_{i}$, where a
negative~$y$ flips the role of left and right. This quantity exists
uniformly by the uniform distribution property of model sets
\cite[Thm.~7.3]{TAO}.  By applying the $\star$-map, with
$\vL^{(r)}_j = \vL^{}_{j} \cap B^{}_{r}(0)$, we obtain the
representation
\begin{align*} 
    \nu^{}_{ij}(y) \,& = \lim_{r\to \infty} \frac{1}{\card (\vL^{(r)})} 
    \sum_{\substack{x^{\star}  \in (\vL^{(r)}_{i})^{\star}\\ 
    (x+y)^{\star} \in W^{}_{j}}}
    \! \! 1 \; = \; \frac{1}{\text{vol}(W)} \int_{\RR^{n-1}}
    \mathbf{1}_{W^{}_{i}}(z) \,
    \mathbf{1}_{W^{}_{j}}(z+y^{\star})\, \dd z \\ &  = \,
     \frac{\big(\mathbf{1}^{}_{-W^{}_{i}} * 
     \mathbf{1}^{}_{W^{}_{j}}\big)(y^{\star})}{\text{vol}(W)} \,
     \eqdef \, g^{}_{ij}(y^{\star})\ts ,
\end{align*}
where the second equality in the first line holds by Weyl's theorem on
uniform distribution \cite{KN,TAO}.  Up to a factor of $\vol(W)^{-1}$,
the functions $g^{}_{ij}$ are precisely the covariograms of
Definition~\ref{def:cvg}, so
\begin{equation}\label{eq:pair_corr_def_conv}
   \nu^{}_{ij}(x) \, = \, \begin{cases} \frac{1}{\vol(W)} \, 
   \bigl(\bs{1}^{}_{-W^{}_{i}} \ast 
   \bs{1}^{}_{W^{}_{j}}\bigr)(x^{\star})\ts,  & \mbox{if} \ x \in 
   \vL^{}_{j}-\vL^{}_{i}\ts, \\ 0 \ts, & \mbox{otherwise}. \end{cases}
\end{equation}

\begin{remark}\label{rem:rem_sample}
  Recall that, by Proposition~\ref{prop:prop_cvg}, the covariogram is
  a \textit{continuous} function. Moreover, if
  $x \in \vL^{}_{j}-\vL^{}_{i}$, the points $x^{\star}$ are dense in
  $W_{j} - W_{i}$, by definition of the CPS, \textit{and} uniformly
  distributed by Weyl's theorem \cite{KN,TAO}. Therefore, plotting the
  covariogram on a~sufficiently large sample of these $x^{\star}$, one
  expects a good illustration of the continuous function and its
  behaviour, though care is definitely required in view of the vast
  range of phenomena that are possible in the realm of continuous
  functions. \exend
\end{remark}

By realising the covariogram of the window as the autocorrelation
function of the model set, we can circumvent issues with the window in
internal space, and instead work with the renormalisation relations
for $\nu^{}_{ij}$. These techniques were introduced in \cite{BG16} and
further developed and used, for example, in
\cite{BGMan,ManiboThesis,Maz25}.  We recall the main result that
profits from the inflation structure and enables computing these
quantities.

\begin{prop}[{\cite[Prop.~2.2.1]{ManiboThesis}} and
  {\cite[Lemma~3.16]{BGMan}}]\label{prop:renorm} Let\/
  $\{\vL^{}_{i}\}_{i=1}^n$ be a fixed point of a~primitive geometric
  inflation\/ $\varrho$ with inflation factor\/ $\lambda >1$ arising
  from a substitution over an\/ $n$-letter alphabet. Then, the pair
  correlation functions\/ $\nu^{}_{ij}$ exist, and satisfy the
  \emph{exact} renormalisation equations
\[
     \nu^{}_{ij}(x) \, = \, \myfrac{1}{\lambda} \sum_{k, \ell =1}^n \, 
     \sum_{r\in T^{}_{ik}} \, \sum_{s\in T^{}_{j\ell}} \nu^{}_{k\ell} 
     \left(\myfrac{x+r-s}{\lambda} \right) , 
\]
where\/ $T^{}_{\varrho} = (T^{}_{ij})$ is the displacement matrix of
the inflation\/ $\varrho$. \qed
\end{prop}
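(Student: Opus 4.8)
The plan is to derive the renormalisation relation directly from the defining recursion \eqref{eq:EMS} for the control point sets, exploiting that the pair correlations are uniform limits of counting averages. First I would observe that the inflation relation $\vL^{}_i = \bigcup_{k=1}^{n} \bigcup_{r \in T^{}_{ik}} \lambda \vL^{}_k + r$ expresses each point of type $i$ as $\lambda z + r$ for a unique point $z$ of some type $k$ and a unique displacement $r \in T^{}_{ik}$; the disjointness of these unions (which is part of the content of the Pisot/inflation structure) guarantees uniqueness of this decomposition. Similarly, a point of type $j$ at distance $x$ is written as $\lambda w + s$ with $w$ of type $\ell$ and $s \in T^{}_{j\ell}$. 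The key algebraic identity is then that the pair $(x, \text{point of type } i)$ at separation $x$ corresponds, after factoring out $\lambda$, to a pair of points of types $k$ and $\ell$ at separation $\frac{x+r-s}{\lambda}$, since $(\lambda w + s) - (\lambda z + r) = x$ rewrites as $w - z = \frac{x + r - s}{\lambda}$.

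Second I would make the counting precise. Writing $\nu^{}_{ij}(x)$ as the limiting frequency $\lim_{r\to\infty} \frac{1}{\card(\vL^{(r)})} \sum_{x'\in \vL^{(r)}_i,\, x'+x \in \vL^{}_j} 1$, I would substitute the inflation decomposition into both the inner and the outer point. Because $\lambda > 1$ is the inflation factor, the density of $\vL$ scales by $\lambda$ under deflation, which is exactly where the prefactor $\frac{1}{\lambda}$ originates: the total number of control points in a ball grows by a factor of $\lambda$ per inflation step (the PF eigenvalue governs the frequencies), so passing from the frequency relative to the inflated set to the frequency relative to the deflated set introduces this factor. Summing over all admissible $k, \ell$ and all displacements $r \in T^{}_{ik}$, $s \in T^{}_{j\ell}$ then collects every contribution exactly once, by the uniqueness of the decomposition established in the first step.

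A cleaner route, which I would probably prefer for rigor, is to bypass the direct counting and instead work on the side of the covariograms via \eqref{eq:pair_corr_def_conv}. There $\nu^{}_{ij}(x) = \frac{1}{\vol(W)}\bigl(\bs{1}^{}_{-W^{}_i} \ast \bs{1}^{}_{W^{}_j}\bigr)(x^\star)$, and the windows satisfy the IFS \eqref{eq:IFS_window}, namely $W^{}_i = \bigcup_{k} \bigcup_{r \in T^{}_{ik}} \lambda^\star W^{}_k + r^\star$ with the unions measure-disjoint by Proposition~\ref{prop:IFS_window}(ii). I would substitute this decomposition of both $W^{}_i$ and $W^{}_j$ into the convolution, use measure-disjointness to turn the characteristic function of the union into a sum of characteristic functions (valid almost everywhere), and then apply the scaling behaviour of convolution under the linear contraction $\lambda^\star$. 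Since $H = \RR^{m}$ and $\lambda^\star$ acts as multiplication by the product of the Galois conjugates, unimodularity ($\lvert\det M^{}_\varrho\rvert = 1$) forces $\lvert\lambda^\star\rvert^{\,m} = \lambda^{-1}$ up to the appropriate normalisation, and this is precisely what produces the $\frac{1}{\lambda}$ prefactor after changing variables in the convolution integral. Tracking the shift $r^\star - s^\star$ through the convolution and matching $\lambda^\star$-scaling with the Jacobian gives the argument $\frac{x+r-s}{\lambda}$ on the right-hand side.

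The main obstacle, in either approach, is the bookkeeping of the scaling constants and the justification that the limiting frequencies behave correctly under inflation. In the counting approach, one must argue that the outer averaging limit commutes with the finite sum over $k,\ell,r,s$ and that boundary effects from restricting to $B^{}_r(0)$ are negligible --- this is where one genuinely needs the uniform existence of the pair correlations (guaranteed by the uniform distribution property of model sets, \cite[Thm.~7.3]{TAO}) rather than mere existence along a subsequence. In the convolution approach, the delicate point is the exact normalisation: one must verify that the $\vol(W)$ factors, the measure of $W^{}_i$ under $\lambda^\star$-scaling, and the determinant of $\lambda^\star$ combine to yield exactly $\frac{1}{\lambda}$ and not some other power. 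I expect this constant-chasing, together with the careful use of measure-disjointness to split the convolution, to be the technically demanding part, whereas the underlying geometric identity $(\lambda w + s) - (\lambda z + r) = x \Leftrightarrow w - z = \frac{x+r-s}{\lambda}$ is immediate.
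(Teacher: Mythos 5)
Your proposal is essentially correct, but note first that the paper does not prove this proposition at all: it is imported verbatim from the cited thesis and stated with a closing \qed. Your first (counting) route is the one used in that source and in \cite{BGMan}: decompose each point of $\vL^{}_{i}$ uniquely as $\lambda z+r$ via the disjoint unions in \eqref{eq:EMS}, observe $(\lambda w+s)-(\lambda z+r)=x \Leftrightarrow w-z=\frac{x+r-s}{\lambda}$, and obtain the prefactor $\frac{1}{\lambda}$ from the ratio $\card(\vL^{(R/\lambda)})/\card(\vL^{(R)})\to\lambda^{-1}$; your identification of where the constant comes from and of the boundary/uniformity issues as the delicate points is accurate. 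Your second (convolution) route is, almost word for word, the paper's own proof of the companion Proposition~\ref{prop:internal}: split $\bs{1}^{}_{-W_{i}}\ast\bs{1}^{}_{W_{j}}$ using the measure-disjointness of \eqref{eq:IFS_window} from Proposition~\ref{prop:IFS_window}, change variables, and use $\lvert\det(\lambda^{\star})\rvert=\lambda^{-1}$ (which follows from unimodularity, i.e.\ the field norm of the unit $\lambda$ being $\pm1$; your phrasing $\lvert\lambda^{\star}\rvert^{m}=\lambda^{-1}$ should really be the product of the moduli of all conjugates, not an $m$-th power of one of them). The one substantive caveat: the proposition is stated for an arbitrary primitive inflation with $\lambda>1$, with no Pisot, unimodularity, or model-set hypothesis, so the window route cannot prove it in the stated generality --- it only yields the internal-space analogue under the extra assumptions of Proposition~\ref{prop:internal}. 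For the statement as given, your counting argument is the one that actually does the job, and the disjointness it relies on comes from the tiling/supertile structure of any primitive inflation fixed point, not specifically from the Pisot property.
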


As $\lambda>1$, the arguments of the $\nu^{}_{ij}$ on the RHS of the
equations in Proposition~\ref{prop:renorm} are generally smaller in
modulus than those on the LHS. Since $\nu^{}_{ij}(x) =0$ for all
$x\notin \vL^{}_{j} - \vL^{}_{i}$, the system of equations splits
naturally into a self-consistent part, which closes on itself, and the
recursive part. As the correlation functions $\nu^{}_{ij}$ satisfy the
renormalisation relations, their counterparts $g^{}_{ij}$ satisfy the
same equations on a subset of $\RR^{m}$ given by the $\star$-image of
$\vL^{}_{j} - \vL^{}_{i}$. These equations can also be derived without
prior knowledge of the renormalisation equations from
Proposition~\ref{prop:renorm}. It suffices to know the IFS for the
windows, Eq.~\eqref{eq:IFS_window}, which gives some kind of a dual to
the equations from Proposition~\ref{prop:renorm}.  Indeed, the IFS can
be turned into a set of equations for the characteristic functions of
the windows. These, in turn, induce equations for their (cross)
covarigrams, which we shall state next.  We recall that
$\lambda^{\star}$ refers to the linear mapping
$y \mapsto \lambda^{\star}y$ in $\RR^m$, which satisfies
$(\lambda x)^{\star} = \lambda^{\star} x^{\star}$ for all
$x\in \QQ(\lambda)$.

\begin{prop}\label{prop:internal}
  Let\/ $\vL^{}_{i} = \oplam(W^{}_{i})$ be a collection of regular
  model sets arising from a single CPS\/ $(\RR, \, \RR^{m}, \, \cL)$
  describing the control points of a fixed point of a unimodular
  inflation with a~PV inflation factor\/ $\lambda$, and with
  displacement matrix\/ $T^{}_{\varrho} = (T^{}_{ij})$. Then, the
  functions\/
  $g^{}_{\alpha \beta} \defeq \bs{1}^{}_{-W^{}_{\alpha}} \ast
  \bs{1}^{}_{W^{}_{\beta}}$ satisfy, for almost all \/$y \in \RR^{m}$,
  the relations
\[
  g^{}_{ij}(y) \, = \, \myfrac{1}{\lambda} \sum_{k,\ts \ell=1}^{n} \,
  \sum_{r\in T^{}_{ik}} \, \sum_{s \in T^{}_{j\ell}} \, g^{}_{k \ell}
  \bigl( ({\lambda^{\star}})^{-1}(y+ r^{\star} {-} \ts s^{\star})
  \bigr)\ts.
\]
\end{prop}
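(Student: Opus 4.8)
The plan is to derive the functional equation for $g_{ij}$ directly from the window IFS in Eq.~\eqref{eq:IFS_window}, rather than going through the pair correlations $\nu_{ij}$. The starting point is the identity $g_{\alpha\beta} = \bs{1}^{}_{-W^{}_{\alpha}} \ast \bs{1}^{}_{W^{}_{\beta}}$, and the observation that a convolution of indicator functions transforms predictably under both the decomposition of a set into measure-disjoint pieces and under affine maps. First I would record two elementary facts. For the union: if $W = \bigsqcup_m U_m$ is measure-disjoint (as guaranteed by Proposition~\ref{prop:IFS_window}(ii)), then $\bs{1}^{}_{W} = \sum_m \bs{1}^{}_{U_m}$ almost everywhere, and similarly $\bs{1}^{}_{-W} = \sum_m \bs{1}^{}_{-U_m}$. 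For the affine map: if $U = \lambda^{\star} W_k + t^{\star}$, then $\bs{1}^{}_{U}(z) = \bs{1}^{}_{W_k}\bigl((\lambda^{\star})^{-1}(z - t^{\star})\bigr)$, and the corresponding Jacobian factor is $\lvert \det \lambda^{\star}\rvert$.

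Next I would substitute the IFS decompositions of both $-W^{}_{i}$ and $W^{}_{j}$ into the convolution defining $g^{}_{ij}$. By bilinearity of $\ast$ over the (measure-disjoint) unions, this expands into a double sum over $k,\ell$ and over $r \in T^{}_{ik}$, $s \in T^{}_{j\ell}$, with each summand being a convolution of an affinely transformed copy of $\bs{1}^{}_{W^{}_{k}}$ with an affinely transformed copy of $\bs{1}^{}_{W^{}_{\ell}}$. The core computation is then a change of variables in the integral
\[
   \int_{\RR^m} \bs{1}^{}_{-(\lambda^{\star} W^{}_{k} + r^{\star})}(z)\,
   \bs{1}^{}_{\lambda^{\star} W^{}_{\ell} + s^{\star}}(y - z)\, \dd z \ts,
\]
where substituting $z = \lambda^{\star} w - r^{\star}$ pulls out the factor $\lvert \det \lambda^{\star}\rvert$ from $\dd z$ and collapses the integral to $\lvert \det \lambda^{\star}\rvert \, g^{}_{k\ell}\bigl((\lambda^{\star})^{-1}(y + r^{\star} - s^{\star})\bigr)$. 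The shift by $r^{\star} - s^{\star}$ arises naturally once the two offsets $r^{\star}$ and $s^{\star}$ are absorbed into the argument.

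The only genuine subtlety is matching the prefactor: the change of variables produces $\lvert \det \lambda^{\star}\rvert$, yet the claimed relation has $1/\lambda$. This is resolved by the unimodularity hypothesis. Since the substitution is unimodular, $\lvert\det M^{}_{\varrho}\rvert = 1$, and the eigenvalues of $M^{}_{\varrho}$ are the algebraic conjugates of $\lambda$; as $\lambda$ is a PV number of degree $n = m+1$, the internal map $\lambda^{\star}$ acts with determinant equal (up to sign) to the product of the $m$ conjugates of $\lambda$, which is $\pm 1/\lambda$ by the unimodularity relation $\lambda \cdot \prod(\text{conjugates}) = \pm 1$. Hence $\lvert\det\lambda^{\star}\rvert = 1/\lambda$, delivering exactly the stated prefactor. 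I expect this determinant bookkeeping to be the main point requiring care, since one must confirm that the product of absolute values of the conjugates is $1/\lambda$ and not, say, $\lvert\det\lambda^{\star}\rvert$ in some other normalisation; everything else is a routine combination of additivity over measure-disjoint unions and a linear change of variables, valid for almost every $y \in \RR^m$ in the sense already attached to these convolutions.
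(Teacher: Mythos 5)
Your proposal is correct and follows essentially the same route as the paper's proof: substitute the IFS decomposition \eqref{eq:IFS_window} into the convolution, use measure-disjointness from Proposition~\ref{prop:IFS_window} to expand by bilinearity, perform the affine change of variables to extract $\lvert\det\lambda^{\star}\rvert$ and the shift $r^{\star}-s^{\star}$, and invoke unimodularity plus the PV property to identify $\lvert\det\lambda^{\star}\rvert=\lambda^{-1}$. Your extra care with the norm/determinant bookkeeping only makes explicit what the paper states in one line.
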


\begin{proof}
  We recall the properties of the covariogram function from
  Proposition~\ref{prop:prop_cvg} and observe that
  $\bigl(\bs{1}^{}_{-\lambda^{\star}\nts A} \, \ast \,
  \bs{1}^{}_{\lambda^{\star}B} \bigr)(y) \, = |\det (\lambda^{\star})
  | \bigl( \bs{1}^{}_{-A} \ast \bs{1}^{}_{B} \bigr)
  \big(({\lambda^{\star}})^{-1}y\bigr)$ holds for arbitrary compact
  sets $A,B$, which follows by a standard change of variables
  argument. We also recall that the unions on the RHS of
  \eqref{eq:IFS_window} are measure disjoint and their boundary is of
  Lebesgue measure zero (Proposition~\ref{prop:IFS_window}). 
  Then, we obtain
\begin{align*}
     g^{}_{ij}(y)\, & = \,  \bigl( \bs{1}^{}_{-W^{}_{i}} \, \ast \,   
     \bs{1}^{}_{W^{}_j} \bigr) (y) \, = \, \Bigl(\bs{1}^{}_{-\bigcup_{k=1}^n 
     \bigcup_{r \in T_{ik}} \lambda^{\star} W^{}_k + r^{\star}} \, \ast \,   
     \bs{1}^{}_{\bigcup_{\ell=1}^n \bigcup_{s \in T_{j\ell}} 
     \lambda^{\star} W^{}_{\ell} + s^{\star}} \Bigr) (y)\\
    & = \sum_{k,\ts \ell=1}^{n} \, \sum_{r\in T^{}_{ik}} \, \sum_{s \in T^{}_{j\ell}} 
      \, \lvert \det (\lambda^{\star}) \rvert \,
      \bigl(\bs{1}^{}_{-W^{}_{k}}  \ast \,   
      \bs{1}^{}_{W^{}_{\ell}}\bigr) \bigl( ({\lambda^{\star}} )^{-1}
      (y+ r^{\star} {-} \ts s^{\star}) \bigr) \\
   & = \, \myfrac{1}{\lambda} \sum_{k,\, \ell=1}^{n} \, \sum_{r\in T^{}_{ik}} \, 
   \sum_{s \in T^{}_{j\ell}} \, g^{}_{k\ell}\bigl( ({\lambda^{\star}})^{-1}
   (y+ r^{\star} {-} \ts s^{\star}) \bigr)\ts,
\end{align*}
which holds almost everywhere. For the last equality, let us recall
that the Pisot property and unimodularity together imply that
$\lambda^{-1} = \lvert \det (\lambda^{\star}) \rvert$.
\end{proof}

We refer the reader to \cite{Jan} for a brief summary of the
non-unimodular setting with LCAG~$H$ having a $p$\ts-adic
component. Let us turn to concrete examples and explain further
details.

\section{Binary Alphabet Examples}\label{section3}

In this section, we provide some examples of determining the
covariograms of Rauzy fractal windows from Pisot substitutions over a
binary alphabet. We first construct a model set with a highly
irregular window, and implement the renormalisation procedure to
calculate the covariogram. Note that in this one-dimensional internal
space case, as conjectured in \cite{BGM2024,Jan}, there are two
possibilities for the covariogram: a piecewise linear function,
arising from interval-type windows, or a~`spiky' function, as
illustrated in Figure~\ref{fig:SilverCovario} below. The latter arises
when the window is a Cantorval. For an example where the internal
space is two-dimensional, we refer the reader to \cite{Klick}.

\subsection{The sister silver mean tiling}
Let us construct the \emph{sister silver mean} (SSM) chain. Consider
the binary alphabet $\cA = \{a\ts,b\}$, and the substitution rule
\[
     \varrho^{}_{s} \,:\, \begin{cases}
             a  \mapsto  bba  \ts,\\
             \ts b \ts \mapsto  ab \ts,  \end{cases}
\]
which we abbreviate as
$\varrho^{}_{s} = (bba,ab) = \big(\varrho^{}_{s}(a),
\varrho^{}_{s}(b)\big)$.  Starting with the legal seed
$\omega^{}_{(0)} = a|a$, where $|$ denotes the origin, by iteration
one obtains a sequence of growing words $\omega^{}_{(n)}$
\[ 
  a\ts|\ts a \xmapsto {\, \varrho^{}_{s}\, } bba\ts | \ts bba
  \xmapsto{\, \varrho^{}_{s}\, } ababbba\ts|\ts ababbba\xmapsto{\,
    \varrho^{}_{s}\, } {\dots} \xmapsto{\, \varrho^{}_{s}\, }
  \omega_{(n)} \xrightarrow{\, n \to \infty \,} \omega \xmapsto{\,
    \varrho^{}_{s}\, } \omega^{\prime} \xmapsto{\, \varrho^{}_{s}\, }
  \omega \ts,
\]
which converges, with obvious meaning in the product topology, towards
a $2$-cycle of bi-infinite words
$\omega, \omega^{\prime} \in \cA^{\ZZ}$.  The left-infinite half is
fixed, while the right-infinite half alternates between two words that
differ on a set of zero density only. In fact, they form a proximal
pair; see \cite[Sec.~4.3]{TAO} for details.

Now, we can move to the geometric description and easily deal with
this difference in the cut-and-project description.  The substitution
matrix of $\varrho^{}_{s}$ is
$ M^{}_{s} =\big(\begin{smallmatrix}1 & 1\\ 2 &
  1\end{smallmatrix}\big)$, with PF eigenvalue $\lambda =
1+\sqrt{2}$. The left and (statistically normalised) right PF
eigenvectors are
\begin{equation} \label{eq:PF-eigenvec} v^{}_{\ell} =
  \bigl(\sqrt{2},\, 1\bigr) \quad \text{ and } \quad v^{}_{r} \, = \,
  \bigl(\lambda^{-1}, \,\lambda^{-1}\ts\sqrt{2}\, \bigr)^{\top} \ts
  =\,\bigl(\lambda-2, \,3-\lambda\, \bigr)^{\top} .
\end{equation}

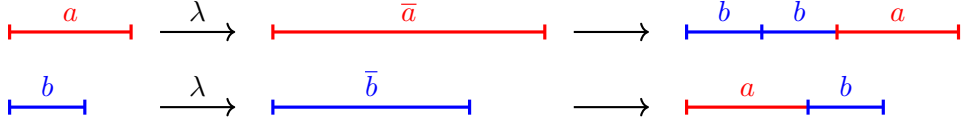
\begin{figure}
\begin{tikzpicture}[scale=0.5]
  \draw[blue, very thick] (-3,0) -- (-1,0)node[above, pos=0.5]{$b$}
  ; %bottom left b %
  \draw[blue, very thick] (-3,.2) -- (-3,-.2); \draw[blue, very thick]
  (-1,.2) -- (-1,-.2); %bottom left b end %
  \draw[thick,->] (1,0) -- (3,0)node[above,
  pos=0.5]{$\lambda$}; %bottom first arrow
  \draw[thick,->] (1,2) -- (3,2)node[above,
  pos=0.5]{$\lambda$}; %top first arrow
  \draw[red, very thick] (-3,2) -- (0.23,2)node[above,
  pos=0.5]{$a$}; %top left a tile&
  \draw[red, very thick] (-3,2.2) -- (-3,1.8); \draw[red, very thick]
  (0.23,2.2) -- (0.23,1.8); %top left a end
  \draw[blue, very thick] (4,0) -- (9.23,0)node[above,
  pos=0.5]{$\overline{b}$}; %bottom middle
  \draw[blue, very thick] (4,.2) -- (4,-.2); \draw[blue, very thick]
  (9.23,.2) -- (9.23,-.2); %end bottom middle
  % \draw[blue, very thick] (4,0) -- (7.23,0)node[above,
  % pos=0.5]{$\overline{b}$}; bottom middle \draw[blue, very thick]
  % (4,.2) -- (4,-.2); \draw[blue, very thick] (7.23,.2) --
  % (7.23,-.2); %end bottom middle
  \draw[thick,->] (12,2) -- (14,2); %top second arrow
  \draw[thick,->] (12,0) -- (14,0); %bottom second arrow
  \draw[blue, very thick] (15,2) -- (17,2) node[above,
  pos=0.5]{$b$}; %top right b
  \draw[blue, very thick] (15,1.8) -- (15,2.2); \draw[blue, very
  thick] (17,1.8 ) -- (17,2.2); %end top right b
  \draw[blue, very thick] (17,2) -- (19,2) node[above,
  pos=0.5]{$b$}; %top right b middle
  \draw[red, very thick] (19,1.8 ) --
  (19,2.2); %end top right b middle
  \draw[red, very thick] (19,2) -- (22.23,2) node[above,
  pos=0.5]{$a$}; %top right a
  \draw[red, very thick] (22.23,1.8 ) -- (22.23,2.2); %end top right a
  \draw[red, very thick] (15,0) -- (18.23,0) node[above,
  pos=0.5]{$a$}; %lower right a
  \draw[red, very thick] (15,-.2) -- (15,.2); \draw[blue, very thick]
  (18.23,-.2) -- (18.23,.2); %end lower right a
  \draw[blue, very thick] (18.23,0) -- (20.23,0) node[above,
  pos=0.5]{$b$}; %lower right b
  \draw[blue, very thick] (20.23,-.2) --
  (20.23,.2); %end lower right b
  \draw[red, very thick] (4,2) -- (11.23,2)node[above,
  pos=0.5]{$\overline{a}$} ; %top middle
  \draw[red, very thick] (4,2.2) -- (4,1.8); \draw[red, very thick]
  (11.23,2.2) -- (11.23,1.8); %end top middle
\end{tikzpicture}
\caption{Geometric inflation rule for the self-similar SSM
  tiling.  \label{fig tiling pic}}
\end{figure}

The $a$ and $b$ prototiles for this choice have length $\sqrt{2}$ and
$1$. The inflated $a\ts$-tile is called the (level-one) $a$-supertile,
and, analogously, we have the (level-one) $b\ts$-supertile. Below,
they will be called the $a$ and $b$ supertiles, and denoted by
$\overline{a}$ and $\overline{b}$ respectively; the inflation rule is
depicted in Figure~\ref{fig tiling pic}. Denote the set of left
endpoints (control points) of tilings corresponding to $\omega$ and
$\omega^{\prime}_{}$ by $\vL^{}_{\omega}$ and
$\vL^{}_{\omega^{\prime}_{}}$. These sets are in a natural one-to-one
correspondence with $\omega$ and $\omega^{\prime}_{}$, and each has a
density of $\tfrac{2 + \sqrt{2} \ts }{4}$. Thus, the discrepancies
between the two fixed points form a zero density set; compare
\cite[Ch.~4]{TAO}.

Denote the sets of control points of the tiles of types $a$ and $b$ by
$ \vL^{}_{a}$ and $ \vL^{}_{b}$, with
$ \vL = \vL_{a} \,\dot{\cup}\, \vL_{b}$.  By our choice of the control
point, it immediately follows that $\vL \subset \ZZ[\sqrt{2} \,]$,
which is the smallest $\ZZ\ts$-module into which we can embed
$\vL$. The standard Minkowski embedding of
$\vL \subset \ZZ[\sqrt{2} \,]$ gives the lattice
$\cL = \ZZ(1,1) \oplus \ZZ(\sqrt{2}, -\sqrt{2})$; see
\cite[Ch.~7]{TAO} and \cite[Thm.~2.13.1]{Koch} for details.  In
particular, the $\star$-map is given by the non-trivial Galois
automorphism of $\QQ(\sqrt{2}\ts)$, which is
$a+b\sqrt{2} \mapsto (a+b\sqrt{2}\ts)^{\star} = a-b\sqrt{2}$.  This
gives the Euclidean CPS
\begin{equation} 
\label{eq CPS}
\renewcommand{\arraystretch}{1.2}
   \begin{array}{ccccc@{}l}
     \RR & \xleftarrow{\;\;\; \pi \;\;\; }
     & \RR \nts\nts \times \nts\nts \RR &
   \xrightarrow{\;\: \pi^{}_{\text{int}} \;\: } & \RR & \\
     \cup & & \cup & & \cup  & \hspace*{-2ex}
        \raisebox{1pt}{\text{\scriptsize dense}} \\
     \pi (\cL) & \xleftarrow{\;\ts 1-1 \;\ts \ts} & \cL
       & \xrightarrow{ \qquad } 
       &\pi^{}_{\text{int}} (\cL) & \\ \| & & & & \| & \\ 
     L = \ZZ[\sqrt{2}\,] &
      \multicolumn{3}{c}{\xrightarrow{\qquad\quad\quad \  \star 
   \quad\quad\qquad}} &  {L}^{\star \nts} = \ZZ[\sqrt{2}\,]  &  \end{array}
\renewcommand{\arraystretch}{1}
\end{equation} 
By construction, $\pi^{}_{\text{int}}(\cL)$ is dense in internal
space. From a CPS of the form~\eqref{eq CPS}, we can construct a
regular model set, with suitable $W \subset \RR$, as
\[
  \oplam(W)\, \defeq \, \{x \in L : x^{\star} \in W\} \, \subset \,
  \ZZ[\sqrt{2}\,]\ts.
\]

\begin{figure}
\includegraphics[width = 0.85 \textwidth]{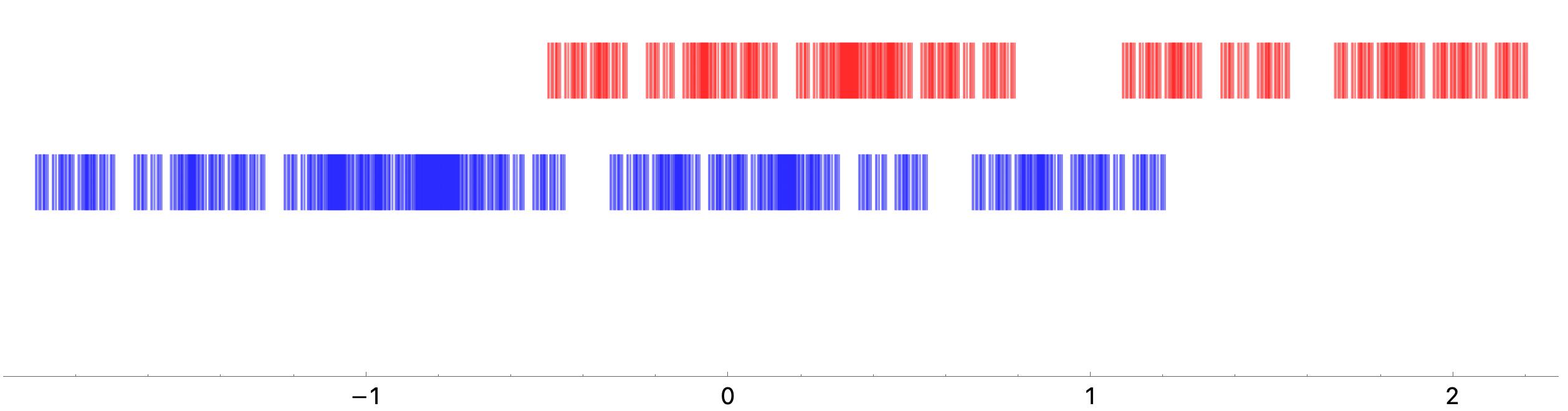}
\caption{The windows of the SSM inflation tiling, which are the fixed
  point of the IFS \eqref{eq:IFS_SSM}; $W^{}_{a}$ is red (top) and
  $W^{}_{b}$ is blue (bottom).  The windows are one-dimensional, but
  we assign some fixed arbitrary height to the points for
  illustration. The windows are measure-theoretically disjoint, but
  the resolution is limited by the large Hausdorff dimension of the
  window boundaries. \label{fig:SilverWindow} }
\end{figure}

As described in Section~\ref{section2}, the window corresponding to
the fixed point of the inflation rule, up to the zero-density set of
discrepancies, is found by first considering the following set-valued
iterations,
\[
\begin{split}
  \vL^{}_{a} \, & =\, \big( \lambda  \vL^{}_{a} + 2 \big) \; \dot{\cup}\;
  \lambda   \vL^{}_{b} \ts,  \\
  \vL^{}_{b} \, & =\, \lambda   \vL^{}_{a} \; \dot{\cup} \; \big( \lambda
   \vL^{}_{a} + 1 \big) \; \dot{\cup} \; \big( \lambda
   \vL^{}_{b} + \lambda - 1\, \big)\ts.
\end{split}
\]
The right hand sides describe the positions of the tiles in the
inflated version of the tilings.  For example, applying the inflation
rule to an $a$ tile produces another $a$ preceded by two $b$'s, giving
the first term in the first equation. Similarly, inflating a $b$ tile
produces an $a$ at the inflated point, giving the last term in the
first equation. Taking the $\star$-image and closure of the sets,
denoted $W^{}_{a,b}:= \overline{ \vL^{\star}_{a,b}}$, gives the
following internal space IFS,
\begin{equation}\label{eq:IFS_SSM}
\begin{split}
    W^{}_{a} \, & =\, \big( (2-\lambda) W^{}_{a} + 2 \big)\, \cup\, 
         (2-\lambda) W^{}_{b} \ts  , \\
    W^{}_{b} \, & = \, (2-\lambda) W^{}_{a} \, \cup\, \big( (2-\lambda)
        W^{}_{a} + 1 \big) \, \cup\,  \big( (2-\lambda)  W^{}_{b} +
        1 - \lambda\, \big) .
\end{split}
\end{equation}
The fixed point of this contractive IFS, as $\lambda^{\star}<1$, is a
Rauzy fractal; see Figure~\ref{fig:SilverWindow}. By
Proposition~\ref{prop:IFS_window}, these windows define regular model
sets, and so does their union.

There are several ways to determine the Hausdorff dimension of the
boundary of the window. One method, laborious but allowing a complete
reconstruction of the window, involves the graph iterated function
system of the window; see \cite{BGM2024, SingThesis}. Here, we use the
\textit{overlap algorithm} from \cite{AOverlap, BGG-Orbit,
  SolSelfSimilar}, based on the discrepancy matrix; this is applicable
to more complex tilings, but we need only consider our one-dimensional
case.  Consider two overlaid copies of the SSM tiling, one being a
copy shifted by an element of the return module. We say a pair of
tiles, one from each copy, form an \textit{overlap} if the
intersection of their supports has a non-empty interior. We can then
consider the overlap itself as a tile determined by the support, the
type of tiles from which it comes, and the offset of the two tiles. If
the pair of tiles have the same type and position, we call it a
\textit{coincidence overlap}; otherwise, it is referred to as a
\textit{discrepancy overlap}. Therefore, from the original
substitution, we can define another substitution on the overlaps; the
corresponding substitution matrix is called the \textit{discrepancy
  matrix}.

The Hausdorff dimension of the  boundary 
$\partial W$ is then given \cite[Prop.~6.1]{BGG-Orbit} by 
\[
  \dim_{\mathrm{H}} (\partial W) \, = \, \overline{\dim}_{\mathrm{H}}
  (\partial W) \, =\, \frac{ \, \log (\gamma)}{\log
    (\lambda_{\mathrm{PF}})} \ts,
\]
where $\overline{\dim}_{\mathrm{H}} (\partial W)$ is the maximum of
$\overline{\dim}_{\mathrm{H}} (\partial W^{}_{a,b}\ts)$,
$\lambda_{\mathrm{PF}}$ is the PF eigenvalue of the substitution
matrix, and $\gamma$ is the spectral radius of the discrepancy
matrix. The first equality holds because $\vL^{}_{a}$ and $\vL^{}_{b}$
are MLD to $\vL$, see \cite{BGM2024} for details, thus the Hausdorff
dimension of $\partial W^{}_{a}$, $\partial W^{}_{b}$, and
$\partial W$ must be equal \cite[Rem.~7.6]{TAO}.  Implementing this
algorithm, an elementary calculation leads to the discrepancy matrix
\[
    M_{\text{dc}} \,= \, \begin{pmatrix}
          1& 0&0&0&1&1&0\\
          1& 0&0&0&0&0&0\\
          1& 0&0&0&0&0&0\\
          1& 0&0&0&0&0&0\\
          0& 1&0&0&0&0&0\\
          0& 0&1&0&0&1&1\\
          0& 0&1&0&1&1&0 \end{pmatrix}
\]
with characteristic polynomial
$\det (M_{\text{dc}} - x \one) = -x^{4}(x^{3}-2\ts x^{2}-1)$.  The
spectral radius of $M_{\text{dc}}$ thus has a value of approximately
$2.205{\ts}57$, giving
\[
  \dim_{\mathrm{H}} (\partial W) \,=\, \frac{ \, \log (\gamma)}{\log
    (\lambda_{\mathrm{PF}})} \, \approx \, 0.897{\ts}45 \ts.
\]
    
Obviously, computing the covariogram of this window according to
Definition~\ref{def:cvg} looks difficult if not impossible. To
overcome this problem, we now introduce the renormalisation
procedure. For the pair correlations $\nu_{\alpha \beta}$, where
$\alpha, \beta \in \{a,b\}$, we have $\nu_{\alpha \beta}(z) = 0$ for
any $z \notin \vL_{\beta}- \vL_{\alpha}$ and $\nu_{\alpha \beta}(0)=0$
for $\alpha \neq \beta$. Moreover, the right PF eigenvector of
\eqref{eq:PF-eigenvec} gives $\nu^{}_{aa}(0)=\lambda^{-1} = \lambda-2$
and $\nu^{}_{bb}(0)~=~\lambda^{-1}~\sqrt{2}~=~3-\lambda$. We also have
the symmetry relation
$\nu_{\alpha \beta}(-z)= \nu_{ \beta \alpha}(z)$, and the summatory
relationship
\[
  \nu(z) \,=\, \nu^{}_{aa}(z) \,+\, \nu^{}_{ab}(z)\,+\,
  \nu^{}_{ba}(z)\,+\, \nu^{}_{bb}(z)
\]
from Proposition~\ref{prop:prop_cvg}{\ts}(vi).

With the symmetry relations and inflation structure, we can determine
the renormalisation relations of $\nu^{}_{\alpha\beta}$ as a special
case of Proposition~\ref{prop:renorm}; see \cite{Klick} for worked
examples.

\begin{table}
\begin{center}
\renewcommand{\arraystretch}{1.7}
\begin{tabular}{ |c|c|c|c|c|c|c|c|c| } 
\hline
  $z$ & $0$  & $1$ &$2$ & $3$ &$\lambda-1$ &$\lambda$
      &$2\lambda-2$ &$\lambda+1$\\
\hline
$\nu^{}_{aa}(z)$ & $\lambda-2$ & $0$ & $0$ & $0$ & $\frac{3\, \lambda-7}{2}$ 
& $\frac{3 \ts \lambda-7}{2}$ & $0$ & $\frac{17-7\, \lambda}{2}$ \\ 
  $\nu^{}_{ab}(z)$ & $0$ & $0$ & $0$ & $0$ & $\frac{3-\lambda}{2}$ &
  $5-2\,\lambda$ 
& $\frac{3\, \lambda-7}{2}$ & $\frac{3\, \lambda-7}{2}$ \\ 
  $\nu^{}_{ba}(z)$ & $0$ & $\frac{3-\lambda}{2}$ & $5-2\,\lambda$
                      & $\frac{3\,\lambda-7}{2}$ 
& $0$ &$\frac{3\,\lambda-7}{2}$& $0$ & $5-2\,\lambda$ \\ 
  $\nu^{}_{bb}(z)$ & $3-\lambda$ & $\frac{3-\lambda}{2}$
                 & $\frac{3\,\lambda-7}{2}$ & $0$ 
& $0$ & $5-2\,\lambda$ & $0$ & $5-2\,\lambda$ \\ 
\hline
\end{tabular}
\vspace*{0.5cm}
\caption{The self-consistent part of the renormalisation equations for
  the SSM inflation tiling. Impossible distances (whose pair
  correlations evaluate to $0$ due to the tile geometry) are
  omitted. The column with $z=0$ contains the relative tile
  frequencies, as given by the right PF eigenvector of
  $M^{}_{\varrho}$. \label{tab:SilverSelf} }
\end{center}
\end{table} 

\begin{theorem}\label{thm:SilverRenorm}
  The pair correlations\/ $\nu_{\alpha \beta}$ with\/
  $\alpha, \beta \in \{a,b\}$ of the SSM tiling satisfy the exact
  renormalisation relations \allowdisplaybreaks
\begin{align*}
  %aa
  \lambda\nu^{}_{aa}(z)\, &= \,\nu^{}_{aa}\Bigl(\myfrac{z}{\lambda}\Bigr) \,+\, 
             \nu^{}_{ab}\Bigl(\myfrac{z+2}{\lambda}\Bigr) \,+\,
              \nu^{}_{ba}\Bigl(\myfrac{z-2}{\lambda}\Bigr)
  \,+\, \nu^{}_{bb}\Bigl(\myfrac{z}{\lambda}\Bigr) ,\\[2mm]
  %ab
  \lambda\nu^{}_{ab}(z)\, &= \, \nu^{}_{aa}\Bigl(\myfrac{z+2}{\lambda}\Bigr)\,+\, 
  \nu^{}_{aa}\Bigl(\myfrac{z+1}{\lambda}\Bigr)\,+\,  \nu^{}_{ab}
              \Bigl(\myfrac{z+3-\lambda}{\lambda}\Bigr)\,+\,
               \nu^{}_{ba}\Bigl(\myfrac{z}{\lambda}\Bigr)\\
     &\phantom{==}\,+\, \nu^{}_{ba}\Bigl(\myfrac{z-1}{\lambda}\Bigr)\,+\,
         \nu^{}_{bb} \Bigl(\myfrac{z+1-\lambda}{\lambda}\Bigr) ,\\[2mm]
  %ba
  \lambda\nu^{}_{ba}(z)\, &= \,
                            \nu^{}_{aa}\Bigl(\myfrac{z-2}{\lambda}\Bigr)\,+\, 
                            \nu^{}_{aa}\Bigl(\myfrac{z-1}{\lambda}\Bigr)\,+\,
                            \nu^{}_{ab}\Bigl(\myfrac{z+1}{\lambda}
  \Bigr)\,+\,  \nu^{}_{ab}\Bigl(\myfrac{z}{\lambda}\Bigr)\\
  &\phantom{==}\,+\, \nu^{}_{ba}\Bigl(\myfrac{z+\lambda-3}{\lambda}\Bigr)\,+\, 
  \nu^{}_{bb}\Bigl(\myfrac{z-1+\lambda}{\lambda}\Bigr) , \\[2mm]
  %bb 
  \lambda\nu^{}_{bb}(z)\, &= \,2   \nu^{}_{aa}
                     \Bigl(\myfrac{z}{\lambda}\Bigr)\,+\, 
                     \nu^{}_{aa}\Bigl(\myfrac{z+1}{\lambda}\Bigr)\,+\,
                     \nu^{}_{aa}\Bigl(\myfrac{z-1}{\lambda}
  \Bigr)\,+\, \nu^{}_{ab}\Bigl(\myfrac{z+1-\lambda}{\lambda}\Bigr) \\
  &\phantom{==}\,+\, \nu^{}_{ab}\Bigl(\myfrac{z+2-\lambda}{\lambda}\Bigr)\,+\, 
  \nu^{}_{ba}\Bigl(\myfrac{z+\lambda-2} {\lambda}\Bigr)\,+\,  \nu^{}_{ba}
  \Bigl(\myfrac{z-1+\lambda} {\lambda}\Bigr)\,+\, 
  \nu^{}_{bb}\Bigl(\myfrac{z+\lambda}{\lambda} \Bigr) ,
\end{align*}
with\/ $z \in \ZZ[\sqrt{2}\,]$, $\lambda= 1+\sqrt{2}$, and\/
$\nu_{\alpha \beta}(z) =0$ for\/ $z \notin \vL_{\beta}- \vL_{\alpha}$.
\qed
\end{theorem}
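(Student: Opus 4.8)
The plan is to obtain all four identities as the explicit instance of the exact renormalisation relation of Proposition~\ref{prop:renorm} for the SSM inflation, so that the work reduces to correctly specialising and then unravelling the master formula
\[
   \nu^{}_{ij}(z) \, = \, \myfrac{1}{\lambda} \sum_{k,\ell} \, \sum_{r \in T^{}_{ik}} \,
   \sum_{s \in T^{}_{j\ell}} \, \nu^{}_{k\ell}\Bigl( \myfrac{z+r-s}{\lambda}\Bigr)\ts .
\]
The first step is to read off the displacement matrix $T^{}_{\varrho}$ directly from the geometric inflation rule of Figure~\ref{fig tiling pic}, equivalently from the set-valued iterations preceding~\eqref{eq:IFS_SSM}. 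Since control points sit at left endpoints and the prototiles $a,b$ have lengths $\sqrt{2}$ and $1$, the supertile $\varrho^{}_{s}(a)=bba$ places its two $b$-tiles at offsets $0,1$ and its $a$-tile at offset $2$, while $\varrho^{}_{s}(b)=ab$ places its $a$-tile at offset $0$ and its $b$-tile at offset $\sqrt{2}=\lambda-1$. This yields
\[
   T^{}_{\varrho} \, = \, \begin{pmatrix} T^{}_{aa} & T^{}_{ab} \\ T^{}_{ba} & T^{}_{bb}
   \end{pmatrix} \, = \, \begin{pmatrix} \{2\} & \{0\} \\ \{0,1\} & \{\lambda-1\} \end{pmatrix} .
\]

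Second, I would substitute this matrix into the master formula for each of the four choices $(i,j)\in\{a,b\}^{2}$ and multiply through by $\lambda$. The number of summands equals $\sum_{k,\ell}\card(T^{}_{ik})\,\card(T^{}_{j\ell})$, namely four for $(a,a)$, six each for $(a,b)$ and $(b,a)$, and nine for $(b,b)$; in the last case the two contributions from $(r,s)=(0,0)$ and $(r,s)=(1,1)$ in the $k=\ell=a$ block both produce the argument $z/\lambda$ and combine into the coefficient $2$ appearing in the statement. For each summand the argument is $(z+r-s)/\lambda$ with $r$ an offset drawn from the $i$-th row and $s$ from the $j$-th row of $T^{}_{\varrho}$, and I would simplify the shifts using $\lambda-1=\sqrt{2}$ to bring them into the displayed form.

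Third, I would invoke the support condition $\nu^{}_{\alpha\beta}(z)=0$ for $z\notin\vL^{}_{\beta}-\vL^{}_{\alpha}$, recorded at the end of the statement, so that only arguments lying in the admissible difference sets survive; this is exactly what separates the self-consistent block tabulated in Table~\ref{tab:SilverSelf} from the genuinely recursive part onto strictly smaller distances. No step here is deep: the whole argument is careful bookkeeping, and the one place to slip is tracking the offsets and the sign of $r-s$. I would guard against this with two independent checks. First, the reflection symmetry $\nu^{}_{\alpha\beta}(-z)=\nu^{}_{\beta\alpha}(z)$, together with the evenness of $\nu^{}_{aa}$ and $\nu^{}_{bb}$, must carry the $(a,b)$ identity onto the $(b,a)$ identity. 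Second, evaluation at $z=0$ using the frequencies $\nu^{}_{aa}(0)=\lambda-2$ and $\nu^{}_{bb}(0)=3-\lambda$ from the right PF eigenvector must return $\lambda\nu^{}_{aa}(0)=1$ and, more generally, values consistent with the $z=0$ column of Table~\ref{tab:SilverSelf}; here it is useful to note that no two control points are closer than $1$, so every argument of modulus below $1$ automatically contributes $0$, which collapses most of the spurious small-scale terms.
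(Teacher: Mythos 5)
Your proposal follows exactly the route the paper intends: Theorem~\ref{thm:SilverRenorm} is presented as a direct specialisation of Proposition~\ref{prop:renorm}, and the paper gives no further argument beyond pointing to that proposition (and to \cite{Klick}). Your displacement matrix is the correct one --- it is precisely what the set-valued iterations preceding \eqref{eq:IFS_SSM} encode, namely $T^{}_{aa}=\{2\}$, $T^{}_{ab}=\{0\}$, $T^{}_{ba}=\{0,1\}$, $T^{}_{bb}=\{\lambda-1\}$ --- and your count of summands ($4,6,6,9$, with the pairs $(r,s)=(0,0)$ and $(1,1)$ merging into the coefficient $2$ in the $bb$ relation) is right. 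The two safeguards you propose, the symmetry $\nu^{}_{ab}(-z)=\nu^{}_{ba}(z)$ and evaluation at small $z$ against the PF frequencies and Table~\ref{tab:SilverSelf}, are exactly the right ones.

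One warning: if you carry out the substitution literally with this $T^{}_{\varrho}$, you will \emph{not} land on the display of Theorem~\ref{thm:SilverRenorm} verbatim. Every term in which $r$ or $s$ is drawn from $T^{}_{bb}=\{\lambda-1\}$ comes out with a shifted argument relative to the printed one; for instance, the last term of the $ab$ relation computes to $\nu^{}_{bb}\bigl((z+1-\lambda)/\lambda\bigr)$ rather than $\nu^{}_{bb}\bigl((z-\lambda)/\lambda\bigr)$, and the last term of the $bb$ relation to $\nu^{}_{bb}(z/\lambda)$ rather than $\nu^{}_{bb}\bigl((z+\lambda)/\lambda\bigr)$. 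Your own consistency check settles which version is correct: at $z=0$ the printed $bb$ relation returns $(\lambda+1)/2$ for $\lambda\nu^{}_{bb}(0)$ instead of the required $\lambda-1$, and at $z=1$ the printed $ba$ relation returns $\lambda-2$ instead of $(\lambda-1)/2$, whereas the relations obtained from your $T^{}_{\varrho}$ pass both tests and reproduce Table~\ref{tab:SilverSelf}. So the method is the paper's own and your execution of it is sound; it is the displayed formulas, not your derivation, that need adjusting in those few terms.
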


This is an infinite set of linear equations. However, via the
inflation structure, all arguments with $|z| > \lambda+1$ are
recursively determined from the \textit{self-consistent} part of the
equations. As this will be used later, we include the derivation. For
a similar example, where the inflation factor is non-Pisot, we refer
the reader to \cite{BFGR}.  We remark that renormalisation does not
need local recognisability, see \cite{BGMan}, but it makes the process
easier.

\begin{figure}
\vspace*{0.8cm}
\includegraphics[width = 0.85\textwidth]{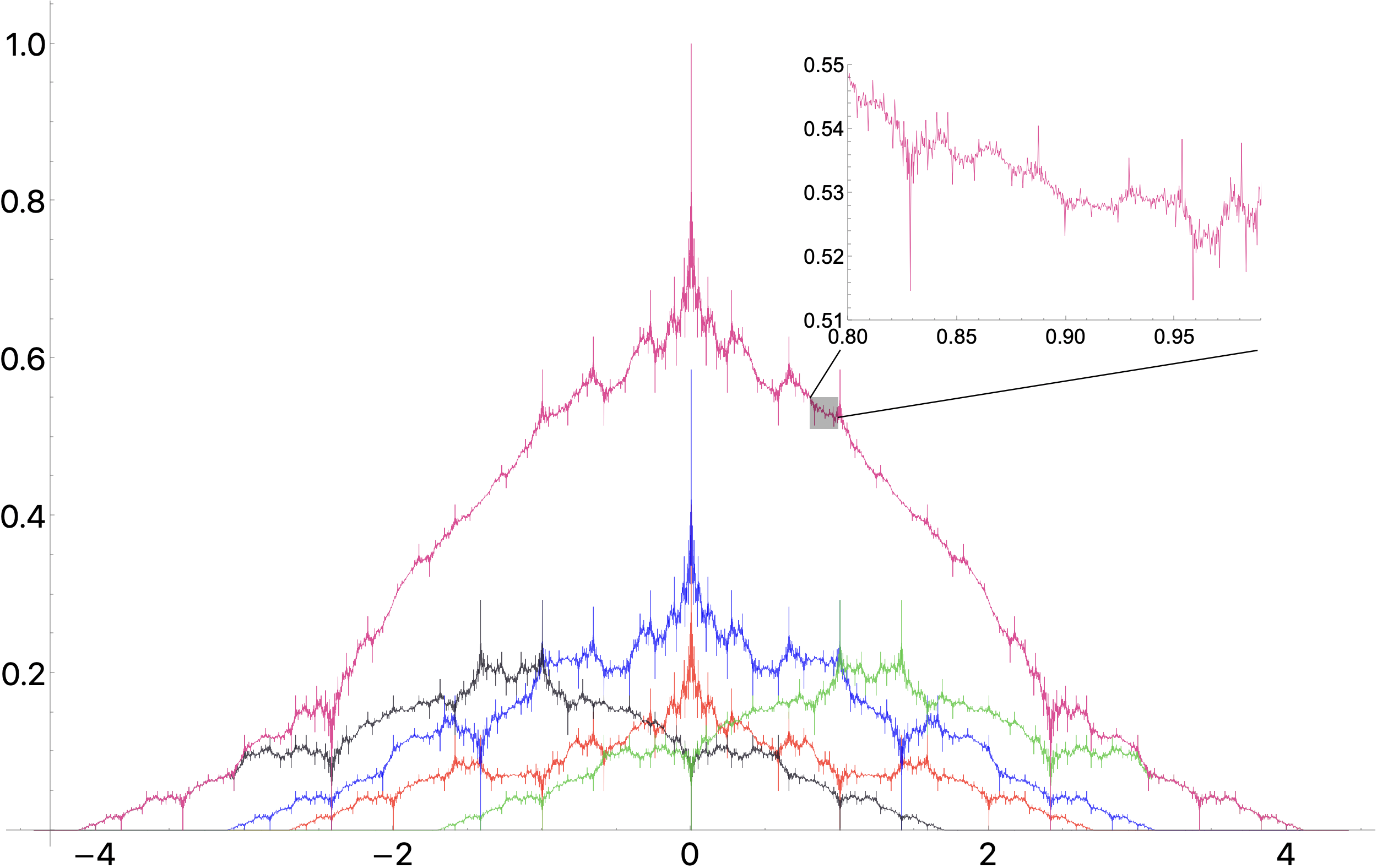}
\caption{Representation of the window covariograms of the SSM system.
  The red function is $g^{}_{aa}$, $g^{}_{bb}$ is blue, $g^{}_{ab}$ is
  black, $g^{}_{ba}$ is grey, and the covariogram $g$ of the total
  window is pink. The plot of $g$ consists of $35{\ts\ts}323$
  points; we also include a closer view of a small section to
  illustrate the irregular behaviour. \label{fig:SilverCovario} }
\end{figure}

\begin{prop}\label{prop:Silver-self-consistent}
  Consider the renormalisation relations of
  Theorem~\textnormal{\ref{thm:SilverRenorm}} with the symmetry and
  vanishing conditions, and the arguments restricted to\/
  $z \in \vL- \vL$ with\/ $|z| \leqslant \lambda+1$. This is a finite,
  closed set of equations with a one-dimensional solution space. In
  particular, once\/ $\nu^{}_{aa}(0)$ is given, the solution is
  unique.
\end{prop}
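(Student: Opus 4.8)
The plan is to convert the infinite family of renormalisation identities in Theorem~\ref{thm:SilverRenorm} into a genuinely finite, homogeneous linear system, and then to pin down the dimension of its solution space by a Perron--Frobenius argument (with explicit elimination as a computational fallback).

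First I would establish closure. Reading off the four relations, the largest shift added to the argument $z$ before division by $\lambda$ has modulus $\lambda$ (attained by the $\pm\lambda$ terms in the equations for $\nu^{}_{ab}$, $\nu^{}_{ba}$ and $\nu^{}_{bb}$). Hence, whenever $|z| \leqslant \lambda+1$, every argument occurring on the right-hand side has modulus at most $\myfrac{|z|+\lambda}{\lambda} \leqslant \myfrac{2\lambda+1}{\lambda} = 2+\lambda^{-1} = \lambda$, using $\lambda^{-1}=\lambda-2$. Since $\lambda < \lambda+1$, no value $\nu^{}_{\alpha\beta}(w)$ with $|w|>\lambda+1$ ever appears in a relation whose left-hand argument satisfies $|z|\leqslant\lambda+1$; the subsystem indexed by $|z|\leqslant\lambda+1$ therefore closes on itself. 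This is exactly the statement that the arguments with $|z|>\lambda+1$ decouple and are determined recursively.

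Next I would reduce to finitely many unknowns. The distances $w\in\vL-\vL$ with $|w|\leqslant\lambda+1$ form a finite set fixed by the prototile lengths $\sqrt2$ and $1$; imposing the symmetry $\nu^{}_{\alpha\beta}(-w)=\nu^{}_{\beta\alpha}(w)$ and the vanishing rule $\nu^{}_{\alpha\beta}(w)=0$ for $w\notin\vL^{}_{\beta}-\vL^{}_{\alpha}$ leaves precisely the entries of Table~\ref{tab:SilverSelf} as unknowns. Substituting these two bookkeeping rules into the closed subsystem yields a finite homogeneous linear system. A nonzero solution certainly exists, since the genuine pair correlations of the SSM tiling form one; moreover, for a primitive inflation every legal pattern, hence every distance in $\vL^{}_{\beta}-\vL^{}_{\alpha}$, occurs with strictly positive frequency, so that solution is strictly positive on all admissible (non-impossible) arguments.

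For the dimension I would collect the admissible unknowns into a vector and write the relations as $B\mathbf{v}=\lambda\mathbf{v}$, where $B$ is the nonnegative integer matrix recording the shift multiplicities (terms landing on impossible distances simply drop out). Because $\mathbf{v}$ is strictly positive, Perron--Frobenius theory forces $\lambda$ to be the spectral radius of $B$; and if $B$ is irreducible, this eigenvalue is simple with a one-dimensional eigenspace. As $\nu^{}_{aa}(0)=\lambda-2$ is a strictly positive coordinate of the Perron eigenvector, fixing $\nu^{}_{aa}(0)$ selects a unique point on this line, giving the claim. The clean step here is closure, a one-line modulus estimate; the real work, and the main obstacle, is verifying that the solution space is \emph{exactly} one-dimensional rather than larger. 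Via Perron--Frobenius this amounts to checking that the digraph on the admissible arguments --- with an edge whenever a shift maps one argument to another --- is strongly connected, i.e.\ that $B$ is irreducible; here the impossible-distance bookkeeping must be handled with care, since several right-hand arguments of modulus $\leqslant\lambda$ nonetheless vanish. A purely computational alternative sidesteps irreducibility: treat $\nu^{}_{aa}(0)$ as a free parameter and solve by elimination --- the two relations at $z=0$ already give $\nu^{}_{bb}(0)=(\lambda-1)\,\nu^{}_{aa}(0)$ --- verifying that every remaining unknown is thereby forced and that no relation survives as an independent constraint.
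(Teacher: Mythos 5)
Your closure and reduction steps coincide with the paper's, and your modulus estimate is in fact more explicit than what the paper records: the largest shift is indeed $\pm\lambda$, and $(|z|+\lambda)/\lambda\leqslant 2+\lambda^{-1}=\lambda<\lambda+1$ correctly shows the subsystem on $\{z\in\vL-\vL: |z|\leqslant\lambda+1\}$ closes on itself. Where you genuinely diverge is the dimension count. The paper simply solves the finite homogeneous system by elimination, observing that every unknown is forced once $\nu^{}_{aa}(0)$ is chosen (with the zeros of Table~\ref{tab:SilverSelf} supplied by the tile geometry or, equivalently, by the vanishing conditions), and then normalises via $\nu^{}_{aa}(0)+\nu^{}_{bb}(0)=1$ using the tile-frequency interpretation. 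Your Perron--Frobenius reformulation $B\mathbf{v}=\lambda\mathbf{v}$ is an attractive conceptual alternative --- positivity of the genuine pair correlations on all admissible distances does force $\lambda=\rho(B)$, and simplicity of the Perron eigenvalue would give one-dimensionality at a stroke; moreover, your existence observation automatically disposes of the extra constraints coming from equations whose left-hand side is a vanishing correlation, since a nontrivial subspace of a one-dimensional space is the whole space.

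The gap, which you identify yourself, is that irreducibility of $B$ is nowhere verified, and without it the Perron route does not close: a reducible nonnegative matrix can have a strictly positive eigenvector for $\rho(B)$ with a higher-dimensional eigenspace (e.g.\ $B=\lambda I$). This is not a formality here, because the vanishing conditions delete many right-hand terms and could in principle disconnect the transition digraph on the admissible arguments. As it stands, your argument therefore only becomes a proof through the ``computational fallback,'' which is precisely the paper's method; your one explicit elimination step, $\nu^{}_{bb}(0)=(\lambda-1)\ts\nu^{}_{aa}(0)$ from the $z=0$ relation for $\nu^{}_{aa}$ (using $2/\lambda=2\lambda-4\notin\vL^{}_{b}-\vL^{}_{a}$), is correct and consistent with Table~\ref{tab:SilverSelf}, but the remaining eliminations would still need to be carried out (or the irreducibility of $B$ checked) to establish that the solution space is exactly, and not merely at least, one-dimensional.
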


\begin{proof}
  By symmetry, we need only to look at $z\geqslant 0$, and when
  $z\leqslant \lambda+1$, no argument on the right-hand side of the
  identities in Theorem~\ref{thm:SilverRenorm} exceeds $\lambda+1$. We
  thus consider
\[
  \vL- \vL \cap [0, \lambda+1] \, = \, \{0\ts,\, 1\ts,\, 2 \ts,\,3 \ts,\,
  \lambda-1\ts,\, \lambda \ts,\, 2\lambda -2\ts,\, \lambda+1 \} \ts ,
\]
which gives the first claim. We have omitted any values prohibited by
the tile geometry, as all pair correlations would then be zero. The
values for $z=0$ follow from the PF eigenvector, and the fact that a
tile can only have one type. The geometry of the tiles implies the
other occurrences of $0$ in Table~\ref{tab:SilverSelf}; however, these
can also be checked using the relations from
Theorem~\ref{thm:SilverRenorm} and the vanishing conditions. Note that
$\lambda^{-1} = \lambda-2$, $\lambda^{2} = 1+2\lambda$, and
$\lambda^{\star} = - \lambda^{-1}$. Using these relations, we can
easily solve the resulting finite set of equations by hand, the
details of which are omitted. In particular, $\nu^{}_{aa}(0)$ is not
fixed by the relations, and all other values can be written in terms
of $\nu^{}_{aa}(0)$; thus, the solution space is indeed
one-dimensional.
   
Uniqueness follows from $\nu^{}_{aa}(0)+\nu^{}_{bb}(0)=1$, with the
values listed in Table~\ref{tab:SilverSelf}, where the interpretation
of $\nu^{}_{aa}(0)$ and $\nu^{}_{bb}(0)$ as the relative tile
frequencies was used.
\end{proof}

A similar proof was given in \cite{Klick}, to which we refer the
reader for further details and examples. Via the correspondence
between the equations from Propositions~\ref{prop:renorm} and
\ref{prop:internal}, this also gives us the renormalisation approach
to the covariogram functions $g^{}_{\alpha\beta}$.  We can now plot
the covariograms using the recursive structure of the equations, with
the self-consistent part serving as initial values. The result is
shown in Figure~\ref{fig:SilverCovario}.  Despite their appearance, by
Proposition~\ref{prop:prop_cvg}, each function is continuous and also
bounded above by a tent function. Moreover, as mentioned in
Remark~\ref{rem:rem_sample}, Figure~\ref{fig:SilverCovario} is an
acceptable illustration of the covariogram as we know its
\textit{exact} value at a dense and uniformly distributed set of
points, in combination with the continuity of the function.

\begin{figure}
\includegraphics[width=0.85 \textwidth]{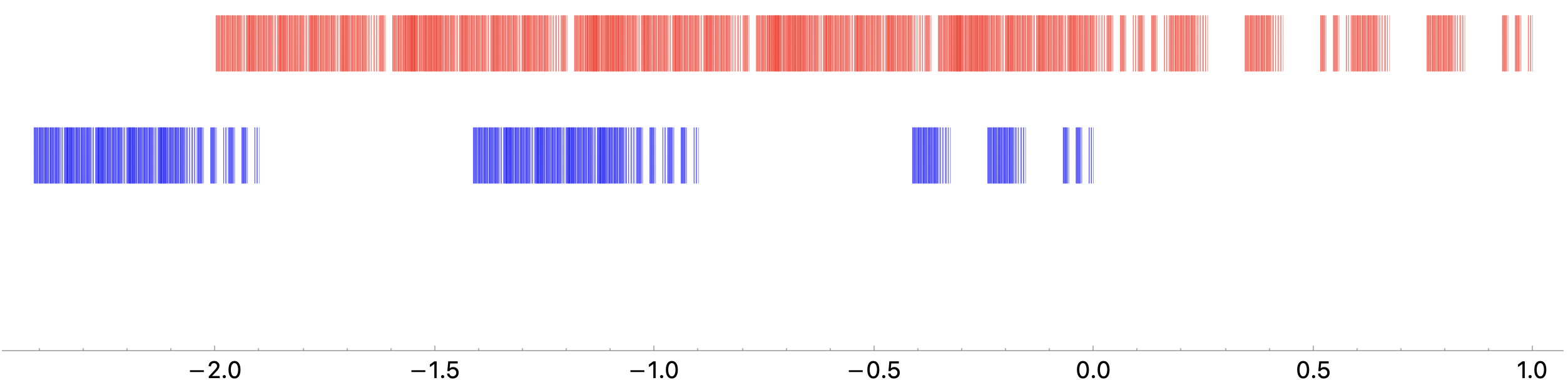}
\caption{Illustration of the windows for the substitution $\sigma$;
  $W^{}_{a}$ is red (top) and $W^{}_{b}$ is blue (bottom). As before,
  the windows are one-dimensional, but we assign some fixed arbitrary
  height to the points for illustration. Furthermore, by
  Proposition~\ref{prop:IFS_window}, the windows are
  measure-theoretically disjoint, but we are limited by the resolution
  of the plot and the large Hausdorff dimension of the window
  boundaries. \label{fig:Example-window} }
\end{figure}

\subsection{A more complex example}  

Next, we construct another window system, with the same CPS, whose
boundaries exhibit even stronger fractal behaviour than that of the
SSM system. Consider the substitution
\[
     \sigma \,:\, \begin{cases}
          a  \mapsto  aaaaabb  \ts,\\
          \ts b  \ts \mapsto  baa \ts,  \end{cases}
\]
with corresponding substitution matrix
$M^{}_{\sigma} =\big(\begin{smallmatrix} 5 & 2\\ 2 &
  1\end{smallmatrix}\big)$ and PF eigenvalue
\[
    \lambda^{}_{\mathrm{PF}} = \lambda^{2} = 2\lambda +1 =3+2\sqrt{2}\ts .
\]

The left and (statistically normalised) right PF eigenvectors are
given by
\begin{equation} \label{eq: example-PF-eigenvec}
    w^{}_{\ell} \, = \, ( \lambda, 1) \quad \text{and} \quad  
    w^{}_{r} \, = \, \Bigl(\myfrac{\lambda-1}{2}, \,\myfrac{3-\lambda}{2} 
    \Bigr)^{\!\top}. 
\end{equation}

This substitution produces a tiling of $\RR$ in an analogous way as
before. To determine the window, we go through the same procedure as
in the previous section. The relations for the sets $ \vL^{}_{a}$ and
$ \vL^{}_{b}$ are given by
\[
\begin{split} 
   \vL^{}_{a} \, & =\,\lambda^{2} \vL^{}_{a}  \; \dot{\cup}\; \big( \lambda^{2}
                   \vL^{}_{a} \,{{+}}\, \lambda \big) \; \dot{\cup}\;
                   \big(\lambda^{2} \vL^{}_{a} \,{{+}}\, 
   2 \lambda \big) \,\,\dot{\cup}\, \big(\lambda^{2}
   \vL^{}_{a} \,{{+}}\, 3 \lambda \big) \\[3pt]
          & \qquad\quad\quad \nts \dot{\cup}\,
            \big( \lambda^{2}  \vL^{}_{a} \,{{+}}\, 4 \lambda \big) 
            \; \dot{\cup}\; \big( \lambda^{2}   \vL^{}_{b} \,{{+}}\, 1 \big)
            \; \dot{\cup} \;
   \big( \lambda^{2}  \vL^{}_{b} \,{{+}}\,  \lambda \,{{+}}\,1 \big) , \\[5pt]
  \vL^{}_{b} \, & =\, \big(\lambda^{2}  \vL^{}_{a} \,{{+}}\, 5
         \lambda \big) \; \dot{\cup}\;
         \big( \lambda^{2}   \vL^{}_{a} \,{{+}}\, 5 \lambda \,{{+}}\,1 \big)
                  \; \dot{\cup}\; 
  \big( \lambda^{2}   \vL^{}_{b} \big) .
\end{split}
\]
Again, taking $W^{}_{a,b}:= \overline{ \vL^{\star}_{a,b}}$ gives the
following internal space IFS,
\[
\begin{split}
   W^{}_{a} \, & =\,\lambda^{-2} W^{}_{a}  \,\cup\, \big( \lambda^{-2}  
   W^{}_{a} \,{{-}}\, \lambda^{-1} \big) \,\cup\, \big(\lambda^{-2} 
   W^{}_{a} \,{{-}}\, 2 \lambda^{-1} \big) \,\cup\, \big(\lambda^{-2}  
   W^{}_{a} \,{{-}}\, 3 \lambda^{-1} \big) \\[3pt]
    & \qquad\quad \quad \;\;\,
    \cup\, \big( \lambda^{-2} W^{}_{a} \,{{-}}\, 4\, \lambda^{-1} \big)
      \,\cup\, \big( \lambda^{-2}  W^{}_{b} \,{{+}}\, 1 \big)
      \,\,\cup\, \big( \lambda^{-2}
      W^{}_{b} \,{{+}}\,  1\,{{-}}\,\lambda^{-1} \big) ,\\[5pt]
  W^{}_{b} \, & =\, \big(\lambda^{-2}  W^{}_{a} \,{{-}}\, 5 \lambda^{-1} \big) 
        \,\cup\,  \big( \lambda^{-2}  W^{}_{a} \,{{+}}\, 1\,{{-}}\, 5
        \lambda^{-1}  \big) \,\cup\,  \big( \lambda^{-2}  W^{}_{b} \big) .
\end{split}
\]
The fixed point of this contractive IFS is again the window system,
illustrated in Figure~\ref{fig:Example-window}.  Via the overlap
algorithm, the dimension of the fractal boundary is determined to be
\[
  \dim_{\mathrm{H}} (\partial W) \,=\, \frac{ \, \log (\gamma)}{\log
    (\lambda_{\mathrm{PF}})} \, \approx \, 0.966{\ts}4739\ts,
\]
where $\gamma \approx 5.493{\ts}959$ is the largest root of
$x^{3}-7x^{2}+7x+7$, the characteristic polynomial of the discrepancy
matrix. The renormalisation relations are as follows.

\begin{table}
\renewcommand{\arraystretch}{1.7}
\begin{tabular}{ |c|c|c|c|c|c|c|c|c|c|c|c| } 
\hline
$z$& $0$ & $1$ & $2$ & $3$& $\lambda$ & $\lambda+1$ & $\lambda+2$ & 
$\lambda+3$ & $2\,\lambda$  & $2\,\lambda+1$\\ \hline
  $\nu^{}_{aa}(z)$ & $\frac{\lambda-1}{2}$ & $0$ & $0$& $0$
    & $\frac{3\,\lambda-5}{4}$ & $\frac{17-7\,\lambda}{4}$
    & $\frac{13\,\lambda-31}{4}$ & $\frac{17-7\,\lambda}{4}$  & 
$\lambda-2$& $17-7\,\lambda$\\
  $\nu^{}_{ab}(z)$ & $0$ & $0$ & $0$& $0$ & $\frac{3-\lambda}{4}$
               & $\frac{3\,\lambda-7}{2}$ & 
                 $\frac{17-7\,\lambda}{4}$ & $0$  & $\frac{3-\lambda}{4}$
               & $\frac{3\,\lambda-7}{2}$\\
$\nu^{}_{ba}(z)$ & $0$ & $\frac{3-\lambda}{4}$ & $\frac{3\,\lambda-7}{2}$& 
                $\frac{17-7\,\lambda}{4}$ & $0$ & $\frac{3\lambda-7}{4}$
              & $\frac{3\,\lambda-7}{2}$ & 
$\frac{17-7\,\lambda}{4}$  & $0$& $\frac{3\,\lambda-7}{2}$\\
  $\nu^{}_{bb}(z)$ & $\frac{3-\lambda}{2}$ & $\frac{3-\lambda}{4}$
               & $\frac{17-7\,\lambda}{4}
$& $0$ & $0$ & $0$ & $0$ & $0$  & $0$& $\frac{17-7\,\lambda}{2}$\\ \hline
\end{tabular}
\vspace*{0.8cm}		
\caption{The self-consistent part of the renormalisation equations for
  the tiling arising from $\sigma$, with the natural tile lengths
  given by the left PF eigenvector.  Distances which are not possible,
  i.e.,~ones for which all pair correlations evaluate to zero due to
  the tile geometry, are omitted. The column corresponding to $z=0$
  contains the relative tile frequencies, as given by the
  (statistically normalised) right PF eigenvector of
  $M^{}_{\sigma}$. \label{table:example-self-consist} }
\end{table}

\begin{theorem}\label{thm:new-example-renorm}
  The pair correlations\/ $\nu_{\alpha \beta}$ with\/
  $\alpha, \beta \in \{a,b\}$ of the tiling induced by\/ $\sigma$
  satisfy the exact renormalisation relations \allowdisplaybreaks[4]
\begin{align*}
 %aa%%%%%%%%%%%%%%%%%%%%%%%
  \lambda^{2} \nu^{}_{aa}(z)\, &=
        \, 5 \nu^{}_{aa}\Bigl(\myfrac{z}{\lambda^{2}}\Bigr)  +  
   4 \nu^{}_{aa}\Bigl(\myfrac{z-\lambda}{\lambda^{2}}\Bigr)   +  
   4 \nu^{}_{aa}\Bigl(\myfrac{z+\lambda}{\lambda^{2}}\Bigr)   +   
   3 \nu^{}_{aa}\Bigl(\myfrac{z-2\,\lambda}{\lambda^{2}}\Bigr)  + 
   \nu^{}_{bb}\Bigl(\myfrac{z+\lambda}{\lambda^{2}}\Bigr)\\
         &\phantom{==}  +  3 \nu^{}_{aa}
           \Bigl(\myfrac{z+2\lambda}{\lambda^{2}}\Bigr)   +  
   2 \nu^{}_{aa}\Bigl(\myfrac{z-3\,\lambda}{\lambda^{2}}\Bigr)   +   
   2 \nu^{}_{aa}\Bigl(\myfrac{z+3\,\lambda}{\lambda^{2}}\Bigr)   +  
   \nu^{}_{aa}\Bigl(\myfrac{z-4\,\lambda}{\lambda^{2}}\Bigr)\\
       &\phantom{==}  +  \nu^{}_{aa}
          \Bigl(\myfrac{z+4\,\lambda}{\lambda^{2}}\Bigr)  +  
   2 \nu^{}_{ab}\Bigl(\myfrac{z-1}{\lambda^{2}}\Bigr)   +  2
   \nu^{}_{ba}\Bigl(\myfrac{z+1}{\lambda^{2}}\Bigr)  +  
   \nu^{}_{ab}\Bigl(\myfrac{z-\lambda-1}{\lambda^{2}}\Bigr)\\
   &\phantom{==}  + \nu^{}_{ba}\Bigl(\myfrac{z+\lambda+1}{\lambda^{2}}\Bigr)   
   +  \nu^{}_{ab}\Bigl(\myfrac{z+4\,\lambda-1}{\lambda^{2}}\Bigr)   +  
   \nu^{}_{ba}\Bigl(\myfrac{z-4\,\lambda+1}{\lambda^{2}}\Bigr) + \nu^{}_{bb}
   \Bigl(\myfrac{z-\lambda}{\lambda^{2}}\Bigr) \\
   &\phantom{==}  +
        2 \nu^{}_{ab}\Bigl(\myfrac{z+\lambda-1}{\lambda^{2}}\Bigr)  
   +  2\nu^{}_{ba}\Bigl(\myfrac{z-\lambda+1}{\lambda^{2}}\Bigr)  + 2\nu^{}_{ab}
   \Bigl(\myfrac{z+2\lambda-1}{\lambda^{2}}\Bigr)  + 2 \nu^{}_{bb}
   \Bigl(\myfrac{z}{\lambda^{2}}\Bigr)\\
      &\phantom{==}  +  2 \nu^{}_{ba}
       \Bigl(\myfrac{z-2\,\lambda+1}{\lambda^{2}}\Bigr)  
   + 2\nu^{}_{ab}\Bigl(\myfrac{z+3\,\lambda-1}{\lambda^{2}}\Bigr)  + 
   2\nu^{}_{ba}\Bigl(\myfrac{z-3\,\lambda+1}{\lambda^{2}}\Bigr)\ts,\\[2mm]
 %bb  
  \lambda^{2} \nu^{}_{bb}(z)\, &=  \,2  \nu^{}_{aa}
               \Bigl(\myfrac{z}{\lambda^{2}}\Bigr)  
          +  \nu^{}_{aa}\Bigl(\myfrac{z-1}{\lambda^{2}}\Bigr)  +
        \nu^{}_{ab}\Bigl(\myfrac{z+5\, \lambda}
   {\lambda^{2}}\Bigr)  +  \nu^{}_{aa}\Bigl(\myfrac{z+1}{\lambda^{2}}\Bigr) \\
         &\phantom{==}  +  \nu^{}_{ab}\Bigl(
          \myfrac{z+5\, \lambda^{}+1}{\lambda^{2}}\Bigr) 
   +  \nu^{}_{ba}\Bigl(\myfrac{z-5\,\lambda^{}} {\lambda^{2}}\Bigr)  +   
   \nu^{}_{ba}\Bigl(\myfrac{z-5\,\lambda^{}-1} {\lambda^{2}}\Bigr)  +  
   \nu^{}_{bb}\Bigl(\myfrac{z}{\lambda^{2}}\Bigr), \\[2mm]
 %ab
  \lambda^{2}\nu^{}_{ab}(z)\, &=
          \,\nu^{}_{aa}\Bigl(\myfrac{z-5\lambda}{\lambda^{2}}\Bigr) 
    + \nu^{}_{aa}\Bigl(\myfrac{z-5\lambda-1}{\lambda^{2}}\Bigr)   +  
    \nu^{}_{ab}\Bigl(\myfrac{z}{\lambda^{2}}\Bigr)  + \nu^{}_{aa}
    \Bigl(\myfrac{z-4\lambda}{\lambda^{2}}\Bigr)  +  \nu^{}_{ba}
    \Bigl(\myfrac{z-4\lambda} {\lambda^{2}}\Bigr)\\
         &\phantom{==}  +  \nu^{}_{aa}
          \Bigl(\myfrac{z-4\lambda-1} {\lambda^{2}} \Bigr)   
    +  \nu^{}_{ab}\Bigl(\myfrac{z+\lambda}{\lambda^{2}}\Bigr)   +  \nu^{}_{aa}   
    \Bigl(\myfrac{z-3\lambda}{\lambda^{2}}\Bigr)   +  
    \nu^{}_{aa}\Bigl(\myfrac{z-3\lambda-1} {\lambda^{2}}\Bigr) \\
         &\phantom{==}  +  \nu^{}_{ab}
           \Bigl(\myfrac{z+2\lambda}{\lambda^{2}}\Bigr)   +  
    \nu^{}_{aa}\Bigl(\myfrac{z-2\lambda}{\lambda^{2}}\Bigr)   +  \nu^{}_{aa}
    \Bigl(\myfrac{z-2\lambda-1}{\lambda^{2}}\Bigr)   +  
    \nu^{}_{ab}\Bigl(\myfrac{z+3\lambda}{\lambda^{2}}\Bigr) \\
        &\phantom{==}  +  \nu^{}_{aa}
          \Bigl(\myfrac{z-\lambda}{\lambda^{2}}\Bigr)   +  
    \nu^{}_{aa}\Bigl(\myfrac{z-\lambda-1} {\lambda^{2}}\Bigr)  +  
    \nu^{}_{ab}\Bigl(\myfrac{z+4\lambda} {\lambda^{2}}\Bigr)  +  \nu^{}_{ba}  
    \Bigl(\myfrac{z+1-5\lambda}{\lambda^{2}}\Bigr)\\
        &\phantom{==}  +  \nu^{}_{ba}
         \Bigl(\myfrac{z-5\lambda} {\lambda^{2}}\Bigr)  + 
     \nu^{}_{bb}\Bigl(\myfrac{z+1} {\lambda^{2}}\Bigr)  +  \nu^{}_{ba}
     \Bigl(\myfrac{z+1-4\lambda}{\lambda^{2}}\Bigr)   +  
     \nu^{}_{bb}\Bigl(\myfrac{z+\lambda+1} {\lambda^{2}}\Bigr) ,
\end{align*}
together with\/ $\nu^{}_{ba}(z) = \nu^{}_{ab}(-z)$, $z \in \ZZ[\sqrt{2}\,]$, 
$\lambda = 1+ \sqrt{2}$, and\/ $\nu_{\alpha \beta}(z) =0$ for\/
$z \notin  \vL_{\beta}- \vL_{\alpha}$.  \qed
\end{theorem}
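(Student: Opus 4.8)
The plan is to obtain all four relations as a direct specialisation of the exact renormalisation identity in Proposition~\ref{prop:renorm}, so that the only substantive input is the displacement matrix $T^{}_{\sigma} = (T^{}_{ij})$ of the geometric inflation. First I would read this matrix off from the inflation rule: with the natural tile lengths $\lambda$ and $1$ supplied by the left PF eigenvector $w^{}_{\ell} = (\lambda,1)$, laying out $\sigma(a) = aaaaabb$ places the five $a$-tiles at $0,\lambda,2\lambda,3\lambda,4\lambda$ and the two $b$-tiles at $5\lambda, 5\lambda+1$, while $\sigma(b) = baa$ places a $b$-tile at $0$ and two $a$-tiles at $1, \lambda+1$. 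Equivalently, these sets can simply be read from the set-valued iterations for $\vL^{}_{a}$ and $\vL^{}_{b}$ displayed above, giving
\[
   T^{}_{aa} = \{0,\lambda,2\lambda,3\lambda,4\lambda\},\quad
   T^{}_{ab} = \{1,\lambda+1\},\quad
   T^{}_{ba} = \{5\lambda, 5\lambda+1\},\quad
   T^{}_{bb} = \{0\} .
\]

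The point I would emphasise is that the geometric inflation factor of $\sigma$ is its PF eigenvalue $\lambda^{}_{\mathrm{PF}} = \lambda^{2} = 3+2\sqrt{2}$, not the silver mean $\lambda$ itself; hence the symbol playing the role of the inflation factor in Proposition~\ref{prop:renorm} is $\lambda^{2}$. After multiplying through by $\lambda^{2}$, that proposition reads
\[
   \lambda^{2}\, \nu^{}_{ij}(z) \, = \sum_{k,\ell \in \{a,b\}} \,
   \sum_{r \in T^{}_{ik}} \, \sum_{s \in T^{}_{j\ell}} \,
   \nu^{}_{k\ell}\Bigl(\myfrac{z+r-s}{\lambda^{2}}\Bigr) ,
\]
and I would expand this triple sum term by term for the three index pairs $(i,j) \in \{(a,a),(b,b),(a,b)\}$, carrying out the elementary subtractions $r-s$ in $\ZZ[\sqrt{2}\,]$ and then collecting arguments that coincide. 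The fourth relation is not derived independently: it follows from the symmetry $\nu^{}_{ba}(z) = \nu^{}_{ab}(-z)$, which in turn comes from the evenness of the covariogram (Proposition~\ref{prop:prop_cvg}\,(i)) together with the interchange of the roles of left and right. The vanishing condition $\nu^{}_{\alpha\beta}(z) = 0$ for $z \notin \vL^{}_{\beta} - \vL^{}_{\alpha}$ is carried along unchanged.

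I expect the only genuine difficulty to be the combinatorial bookkeeping, not any conceptual step. The $(a,a)$ case is by far the heaviest: because $|T^{}_{aa}| = 5$, the block $k = \ell = a$ alone yields $5 \times 5 = 25$ summands of the form $\nu^{}_{aa}\bigl((z + (p-q)\lambda)/\lambda^{2}\bigr)$ with $p,q \in \{0,1,2,3,4\}$, and one must correctly count how often each shift $z + m\lambda$ occurs --- multiplicity $5$ for $m = 0$, then $4,3,2,1$ for $m = \pm 1, \pm 2, \pm 3, \pm 4$ --- and then add the four mixed blocks $(k,\ell) \neq (a,a)$, each contributing $\nu^{}_{ab}$, $\nu^{}_{ba}$ or $\nu^{}_{bb}$ terms with their own offsets and multiplicities. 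Sign and multiplicity slips are the obvious risk, so I would verify each assembled relation by evaluating it at small admissible distances and checking consistency with the self-consistent values recorded in Table~\ref{table:example-self-consist}, in particular the $z=0$ column fixed by the right PF eigenvector.
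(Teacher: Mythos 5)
Your proposal is correct and takes the same route the paper intends: Theorem~\ref{thm:new-example-renorm} carries a \textsl{qed} with no separate argument precisely because it is the direct specialisation of Proposition~\ref{prop:renorm} to the displacement sets $T^{}_{aa}=\{0,\lambda,2\lambda,3\lambda,4\lambda\}$, $T^{}_{ba}=\{5\lambda,5\lambda+1\}$, $T^{}_{ab}=\{1,\lambda+1\}$, $T^{}_{bb}=\{0\}$ (read off the displayed set-valued iteration) with inflation factor $\lambda^{2}$, exactly as you describe. Your bookkeeping is right --- the multiplicities $5-|m|$ for the shifts $m\lambda$ in the $aa$-block, the term counts $49$, $9$ and $21$ for the three relations, and the symmetry $\nu^{}_{ba}(z)=\nu^{}_{ab}(-z)$ all reproduce the stated identities.
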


The self-consistent part of the relations is listed in
Table~\ref{table:example-self-consist}. As the calculations are
analogous to Proposition~\ref{prop:Silver-self-consistent}, we omit a
proof.

\begin{figure}
\vspace*{0.8cm}
\includegraphics[width=0.85\textwidth]{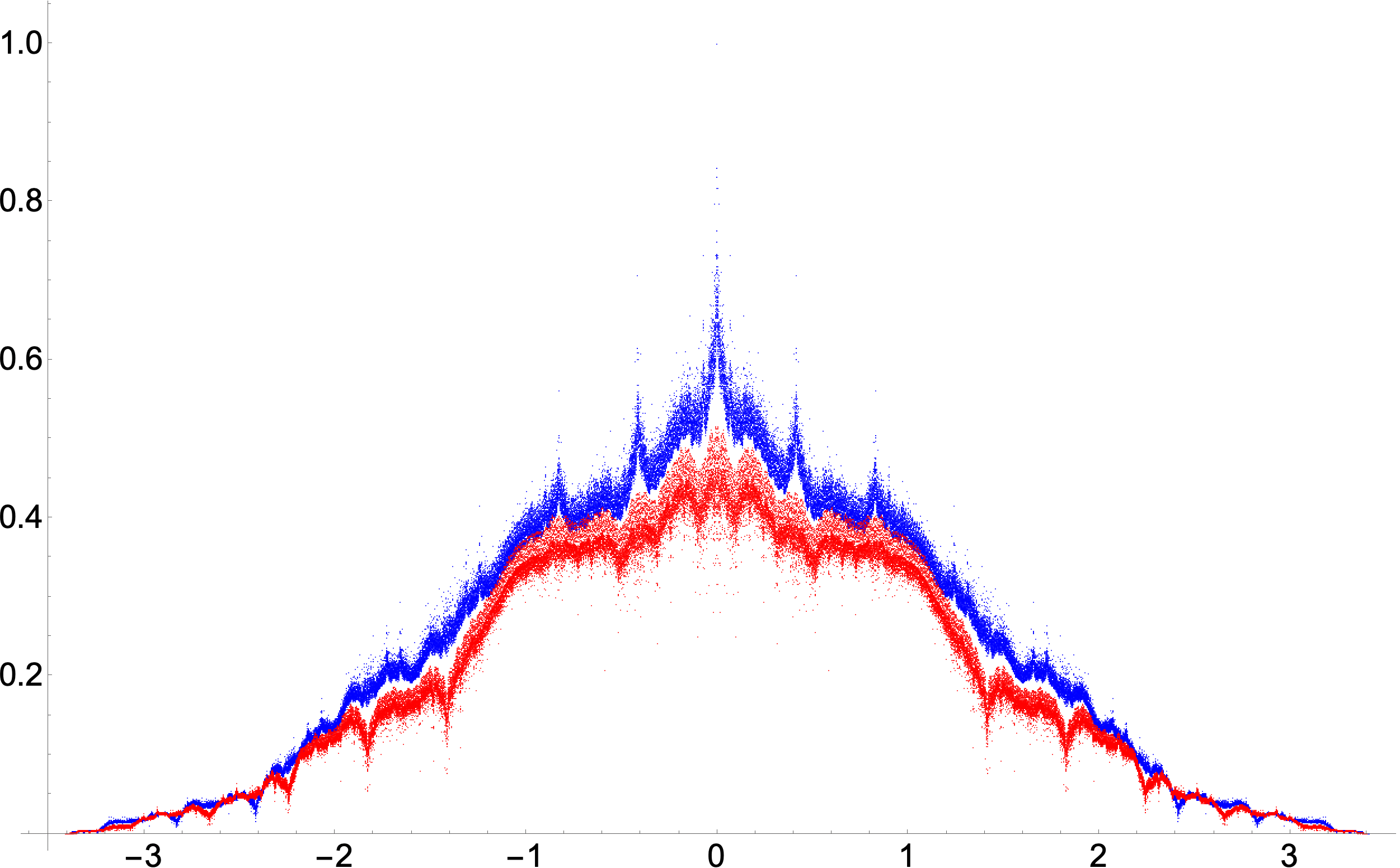}
\vspace*{0.8cm}
\caption{Point plot, with $188{\ts\ts}214$ points, of the covariogram
  of the window of Figure~\ref{fig:Example-window}. Here, the
  splitting behaviour is highlighted; the distances involving an even
  number of $b$'s are blue, and those with an odd number are
  red.  \label{fig:ExampleCovarioSplit}}
\end{figure}

\begin{coro}
  Consider the renormalisation relations of
  Theorem~\textnormal{\ref{thm:new-example-renorm}} with the symmetry
  and vanishing conditions, and the arguments restricted to\/
  $z \in \vL- \vL$ with\/ $|z| \leqslant 2\,\lambda+1$. This is a
  finite, closed set of equations with a one-dimensional solution
  space. In particular, once\/ $\nu^{}_{aa}(0)$ is given, the solution
  is unique.  \qed
\end{coro}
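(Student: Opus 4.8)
The plan is to reproduce the argument of Proposition~\ref{prop:Silver-self-consistent} line by line, the only substantive changes being the larger inflation factor $\lambda^{2} = 2\lambda+1$ and the correspondingly longer relations of Theorem~\ref{thm:new-example-renorm}. First I would use the symmetry relation $\nu^{}_{ba}(z) = \nu^{}_{ab}(-z)$ together with the evenness $\nu^{}_{aa}(-z) = \nu^{}_{aa}(z)$ and $\nu^{}_{bb}(-z) = \nu^{}_{bb}(z)$ (all special cases of $\nu^{}_{\alpha\beta}(-z) = \nu^{}_{\beta\alpha}(z)$) to reduce everything to $z \geqslant 0$. The vanishing condition $\nu^{}_{\alpha\beta}(z) = 0$ for $z \notin \vL^{}_{\beta} - \vL^{}_{\alpha}$ then discards all geometrically impossible arguments, leaving precisely the ten admissible distances $0,1,2,3,\lambda,\lambda+1,\lambda+2,\lambda+3,2\lambda,2\lambda+1$ recorded in Table~\ref{table:example-self-consist}.

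The decisive step is to verify closure. Every argument on the right-hand side of Theorem~\ref{thm:new-example-renorm} has the form $(z+c)/\lambda^{2}$ with a shift $c$ of modulus at most $5\lambda+1$, the extreme value being attained by the terms $(z+5\lambda+1)/\lambda^{2}$ and $(z-5\lambda-1)/\lambda^{2}$ in the $\nu^{}_{bb}$ and $\nu^{}_{ab}$ relations. Hence, for $\lvert z\rvert \leqslant 2\lambda+1$, one has $\lvert z+c\rvert \leqslant 7\lambda+2$, and since $7\lambda+2 < \lambda^{4} = \lambda^{2}(2\lambda+1)$ by $\lambda^{2} = 2\lambda+1$, it follows that $\lvert (z+c)/\lambda^{2}\rvert < 2\lambda+1$. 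Thus no argument escapes the window $\lvert\cdot\rvert \leqslant 2\lambda+1$; after folding negative values back by symmetry, each lands again among the distances listed above. This shows that the restricted system is finite and closes on itself.

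What then remains is linear algebra. Collecting the unknowns into a single vector $\bs{\nu}$ indexed by the admissible triples $(\alpha,\beta,z)$, the restricted relations take the form $\lambda^{2}\bs{\nu} = M\bs{\nu}$ for an explicit nonnegative matrix $M$ read off from the coefficients of Theorem~\ref{thm:new-example-renorm}; equivalently, $\bs{\nu}$ lies in the kernel of $M - \lambda^{2}I$. Using $\lambda^{-1} = \lambda-2$ and $\lambda^{2} = 2\lambda+1$, I would solve this homogeneous system by elimination and show that every entry is a fixed rational multiple of $\nu^{}_{aa}(0)$ --- for instance $\nu^{}_{bb}(0) = \lambda^{-1}\nu^{}_{aa}(0)$ --- thereby reproducing Table~\ref{table:example-self-consist} and establishing that the solution space is exactly one-dimensional. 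Uniqueness is then obtained by imposing the normalisation $\nu^{}_{aa}(0) + \nu^{}_{bb}(0) = 1$, which, via the frequency interpretation and the right PF eigenvector \eqref{eq: example-PF-eigenvec}, pins down $\nu^{}_{aa}(0) = \tfrac{\lambda-1}{2}$.

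The hard part will not be any single idea but the sheer bookkeeping: the $\nu^{}_{aa}$ equation alone carries more than twenty terms, so confirming that the geometric multiplicity of the eigenvalue $\lambda^{2}$ of $M$ is genuinely $1$ --- that the elimination never spawns a second free parameter --- requires a complete and careful tabulation of which shifted arguments collapse onto which admissible distance. This is exactly the step at which a missed coincidence could silently enlarge the solution space, and it is the reason one may safely appeal to the structurally identical, already verified computation of Proposition~\ref{prop:Silver-self-consistent}.
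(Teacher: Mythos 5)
Your proposal is correct and follows exactly the route the paper intends: the paper omits the proof with the remark that the calculations are analogous to Proposition~\ref{prop:Silver-self-consistent}, and you reproduce that argument with the constants updated (your closure bound $|z+c|\leqslant 7\lambda+2 < \lambda^{2}(2\lambda+1)$ with maximal shift $5\lambda+1$ is the right check, and your sample identity $\nu^{}_{bb}(0)=\lambda^{-1}\nu^{}_{aa}(0)$ matches Table~\ref{table:example-self-consist}). Nothing further is needed.
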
    
 
The covariogram can now be computed as before, and is shown in
Figures~\ref{fig:ExampleCovarioSplit} and
\ref{fig:ExampleCovarioBig}. We remark again that this is indeed a
continuous function, despite its \emph{highly} discontinuous
appearance. To our knowledge, this is the first example of such a
covariogram having a `splitting' behaviour.  One can do the following
as a consistency check for the continuity. Take a large patch of the
tiling, approximately $500{\ts\ts}000$ tiles, and construct the set of
tile coordinates.  The Minkowski difference of this set is then taken
with itself; with the above number of tiles, it should produce
approximately five million distinct distances. To efficiently run this
set of distances through the relations from
Theorem~\ref{thm:new-example-renorm}, one can implement a `valid
distance' test; this checks whether the integer coordinates of
$z= a\ts\lambda + b$ have the same parity (or are zero) and
whether~$z^{\star}$ lies in a bounded region of internal space.  The
distance $z$ is then passed to the relations only if both requirements
are met.  Additionally, running the relations in parallel
significantly improves computation time.

\begin{figure}
\includegraphics[width=0.85 \textwidth]{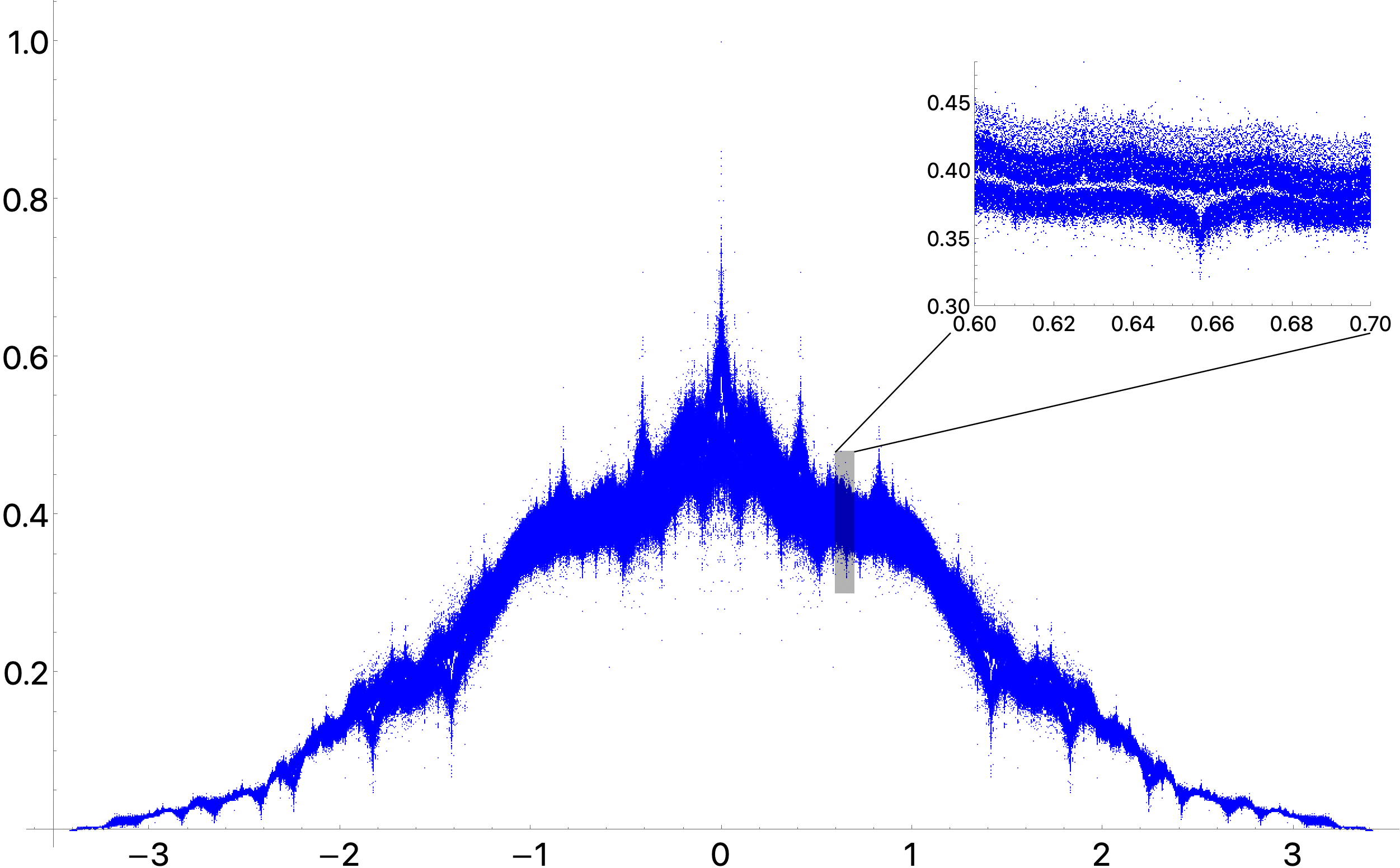}
\caption{Point plot, derived from $4{\ts\ts}826{\ts\ts}361$ points, of
  the covariogram of the window of Fig.~\ref{fig:Example-window}. The
  closer view illustrates the slow convergence and complex behaviour
  of the function.  \label{fig:ExampleCovarioBig}}
\end{figure}  

With these large samples, one sees that the gap will close, albeit
incredibly slowly; Figure~\ref{fig:ExampleCovarioBig} illustrates this
behaviour. We emphasise again, as in Remark~\ref{rem:rem_sample} and
in the previous example, that this is a valid approximation of the
continuous function as we are sampling from a dense and uniformly
distributed subset of the support. The upper and lower branches of the
split can be identified as those distances which contain,
respectively, an even and odd number of $b$'s. This behaviour is thus
a consequence of the \emph{combinatorics} of the substitution and not
triggered by the high boundary dimension of the window, as one might
first guess. This is well worth further examination, but is far beyond
the scope of this brief note. Also, with regard to the original
question of a natural counterpart to the theta series of periodic
point sets, the complex behaviour of the covariograms will also call
for a renewed, and perhaps conceptually different, approach to
aperiodic theta series.

Our above analysis was based on one-dimensional inflation tilings with
an internal space that is also one-dimensional. It is an obvious task
to extend both to higher dimensions. In a first step, one should
consider unimodular inflation tilings with a ternary alphabet, thus
giving Rauzy fractals in $\RR^2$. First results on the Kolakoski
sequence are presented in \cite{Klick}, and the same type of analysis
can be done for the classic Tribonacci sequence. Computationally, both
are significantly more involved yet of limited value because the
windows are simply connected sets with a fractal boundary. The
resulting covariograms still look well behaved and mainly smooth. More
interesting would be the \emph{twisted} Tribonacci sequence
\cite{AI01,BGG-Orbit}, which leads to a disconnected window system
that may be considered as an analogue of a Cantorval.  More work is
needed to explore the possibilities here.

\section*{Acknowledgements}

It is our pleasure to thank Claudia Alfes and Paul Kiefer for helpful
discussions, and Michael Coons and Nicolae Strungaru for useful hints
on the manuscript. We are grateful to an anonymous referee for several
careful and constructive comments, which helped us to improve the
presentation. M.B. is grateful to the School of Mathematics and
Physics of the University of Tasmania in Hobart for hospitality, where
this manuscript was completed.  This work was supported by the German
Research Foundation (DFG, Deutsche Forschungsgemeinschaft) via
Project~A1 within the CRC TRR 358/1 (2023) -- 491392403 (Bielefeld --
Paderborn).

\end{document}